\theoremstyle{plain}
\newtheorem{theorem}{Theorem}
\theoremstyle{remark}
\newtheorem{remark}[theorem]{Remark}
\newtheorem{example}[theorem]{Example}
\theoremstyle{plain}
\newtheorem{corollary}[theorem]{Corollary}
\newtheorem{lemma}[theorem]{Lemma}
\newtheorem{proposition}[theorem]{Proposition}
\newtheorem{definition}[theorem]{Definition}
\newtheorem{conjecture}[theorem]{Conjecture}
\numberwithin{theorem}{section}
\numberwithin{equation}{section}
\def\R{{\mathbb R}}
\def\C{{\mathbb C}}
\newcommand{\E}{{\mathbb E}}
\renewcommand{\P}{{\mathbb P}}
\newcommand{\A}{{\mathscr A}}
\newcommand{\F}{{\mathscr F}}
\newcommand{\eps}{\varepsilon}
\newcommand{\one}{{{\bf 1}}}
\renewcommand{\emptyset}{\varnothing}
\newcommand{\norm}[1]{||#1||}
\newcommand{\esssup}{{\rm{ess\,sup}}\,}
\newcommand{\essinf}{{\rm{ess\,inf}}\,}
\newcommand{\ud}{\, \mathrm{d}}
\begin{document}

\author{Nick Lindemulder}
\address{Delft Institute of Applied Mathematics\\
Delft University of Technology \\ P.O. Box 5031\\ 2600 GA Delft\\The
Netherlands} \email{N.Lindemulder@tudelft.nl}
\author{Mark Veraar}
\email{M.C.Veraar@tudelft.nl}
\author{Ivan Yaroslavtsev}
\email{I.S.Yaroslavtsev@tudelft.nl}

\dedicatory{Dedicated to Ben de Pagter on the occasion of his 65th birthday}

\title[The UMD property for Musielak--Orlicz spaces]{The UMD property for Musielak--Orlicz spaces}

\begin{abstract}
In this paper we show that Musielak--Orlicz spaces are UMD spaces
under the so-called $\Delta_2$ condition on the generalized Young
function and its complemented function. We also prove that
if the measure space is divisible, then a Musielak--Orlicz space
has the UMD property if and only if it is reflexive. As a
consequence we show that reflexive variable Lebesgue spaces
$L^{p(\cdot)}$ are UMD spaces.

\end{abstract}

\thanks{The first and second named authors are supported by the Vidi subsidy 639.032.427 of the Netherlands Organisation for Scientific Research (NWO)}

\keywords{UMD, Musielak--Orlicz spaces, variable $L^{p}$-spaces,
Young functions, vector-valued martingales, variable Lebesgue
spaces}

\subjclass[2010]{Primary: 46B20; Secondary: 46E30, 46E40, 60G42,
46B10}

\maketitle

\section{Introduction}

The class of Banach spaces $X$ with the UMD (Unconditional Martingale Differences) property is probably the most important one for vector-valued analysis. Harmonic and stochastic analysis in UMD spaces can be found in \cite{Burk01,CPSW,HNVW1,Rub} and references therein. Among other things the UMD property of $X$ implies the following results in $X$-valued harmonic analysis:
\begin{itemize}
\item Marcinkiewicz/Mihlin Fourier multiplier theorems (see \cite{CPSW}, \cite[Theorem 5.5.10]{HNVW1} and \cite[Theorem 8.3.9]{HNVW2});
\item the $T_b$-theorem (see \cite{HytTb});
\item the $L^p$-boundedness of the lattice maximal function (see \cite[Theorem 3]{Rub});
\end{itemize}
and in $X$-valued stochastic analysis:
\begin{itemize}
\item the $L^p$-boundedness of martingale transforms (see
\cite{Burk01, HNVW1});

\item the continuous time Burkholder--Davis--Gundy inequalities (see
\cite{NVW1,VerYarpoint,Yar2018});

\item the lattice Doob maximal $L^p$-inequality (see
\cite{Rub,VerYarpoint}).
\end{itemize}

Most of the classical reflexive spaces are UMD spaces. A list of
known spaces with UMD can be found on \cite[p.\ 356]{HNVW1}. On
the other hand a relatively simple to state space without UMD is
given by $X = L^{p}(L^q(L^{p}(L^q(\ldots)\ldots)))$ with $1<p\neq
q<\infty$ (see \cite{Qiu}). The latter space is not only
reflexive, but also uniformly convex.

In \cite{FG91} and \cite{Liu} it has been shown that an Orlicz
space $L^{\Phi}$ has the UMD property if and only if it is
reflexive. In \cite{FG91} the proof is based on an interpolation
argument and in \cite{Liu} a more direct argument is given which
uses known $\Phi$-analogues of the defining estimates in UMD. The
Musielak-Orlicz spaces of course include all Orlicz spaces but
also the important class of variable Lebesgue spaces
$L^{p(\cdot)}$.

It seems that a study of the UMD property of Musielak-Orlicz
spaces $L^{\Phi}$ and even $L^{p(\cdot)}$ is not available in
the literature yet. In the present paper we show that under
natural conditions on $\Phi$, the Musielak-Orlicz space $L^{\Phi}$
has UMD. We did not see how to prove this by interpolation
arguments and instead we use an idea from \cite{Liu}. Even in the
Orlicz case our proof is simpler, and at the same time it provides more information on the UMD constant.

\begin{theorem}\label{thm:introMainXvalued}
Assume that $\Phi,\Psi:T\times[0,\infty)\to [0,\infty]$ are complementary Young functions which both satisfy the $\Delta_2$ condition. Then the Musielak-Orlicz space $L^{\Phi}(T)$ is a UMD space.
\end{theorem}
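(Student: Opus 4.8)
The plan is to reduce the UMD property to the boundedness of martingale transforms, and to verify the latter by exploiting the $\Delta_2$-type convexity/concavity structure of $\Phi$ together with the Doob maximal inequality. Concretely, recall that a Banach space $X$ is UMD if and only if martingale difference sequences are unconditional in $L^p(\Omega;X)$ for some (equivalently, all) $p\in(1,\infty)$; since $L^{\Phi}(T)$ is a Banach function space, it suffices to work with martingale transforms of $L^{\Phi}(T)$-valued martingales and to control them pointwise in $T$ by a scalar square-function argument. First I would record the consequences of the two-sided $\Delta_2$ condition: $\Phi$ and its complement $\Psi$ both satisfying $\Delta_2$ forces $L^{\Phi}(T)$ to be reflexive, hence order-continuous, and—crucially—it yields uniform $\Phi$-analogues of the John--Nirenberg / good-$\lambda$ estimates, equivalently a scalar martingale inequality of the form $\|f\|_{\Phi}\lesssim \|S(f)\|_{\Phi}\lesssim \|f\|_{\Phi}$ relating a (conditional) square function $S(f)$ to the martingale $f$, with constants depending only on the $\Delta_2$ constants of $\Phi,\Psi$.

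The core step is then the following: given an $L^{\Phi}(T)$-valued martingale $(f_n)$ with differences $(df_n)$ and a sequence of signs $(\e_n)$, I would estimate the transformed martingale $g_N=\sum_{n=1}^N \e_n\, df_n$ by passing to the scalar square function in the Musielak--Orlicz modular. The point is that the square function $\big(\sum_n |df_n(\cdot)|^2\big)^{1/2}$, viewed as an element of $L^{\Phi}(T)$, does not see the signs $\e_n$; so the two-sided square-function equivalence above gives $\|g_N\|_{L^{\Phi}(T)}\eqsim \|(\sum_n |df_n|^2)^{1/2}\|_{L^{\Phi}(T)}\eqsim \|f_N\|_{L^{\Phi}(T)}$, uniformly in $N$ and in the signs. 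Here one must be careful to run the equivalence at the level of the $L^p(\Omega;L^{\Phi}(T))$-norm: interchange the roles of the two variables using the Fubini-type structure of $L^p(\Omega;L^{\Phi}(T))$ and the fact that, by $\Delta_2$, $L^{\Phi}(T)$ has nontrivial convexity and concavity exponents, so that Khintchine-type inequalities and the square function estimate transfer from the scalar base to the vector-valued level with uniform constants. This is essentially the mechanism of \cite{Liu}, streamlined.

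I expect the main obstacle to be establishing the scalar square-function equivalence in the Musielak--Orlicz norm with constants controlled purely by the $\Delta_2$ constants, and in particular handling the $t$-dependence of the Young function $\Phi(t,\cdot)$: the classical Burkholder square-function inequalities are stated for fixed exponents, and here one needs a version that is uniform over the (possibly wildly varying) family $\{\Phi(t,\cdot)\}_{t\in T}$. The clean way around this is to avoid proving a square-function inequality from scratch and instead use a vector-valued Burkholder transform estimate on the scalar field combined with a randomization (Rademacher) argument inside $L^{\Phi}(T)$, for which the relevant input is precisely that $L^{\Phi}(T)$ has finite cotype and is $K$-convex—both of which follow from $\Phi,\Psi\in\Delta_2$. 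A secondary technical point is the reduction to finitely supported / bounded simple martingales and the density arguments needed to pass to the limit $N\to\infty$, which are routine given order-continuity of $L^{\Phi}(T)$ but must be stated. Once these are in place, assembling the uniform bound $\|\sum_{n}\e_n df_n\|_{L^p(\Omega;L^{\Phi}(T))}\le C\,\|\sum_n df_n\|_{L^p(\Omega;L^{\Phi}(T))}$ with $C$ independent of $(\e_n)$ gives the UMD property, and tracking constants through the argument yields the promised quantitative bound on the UMD constant.
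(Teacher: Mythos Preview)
Your proposal has a genuine gap at its core. You correctly observe that the lattice square function $S(f)=\big(\sum_n|df_n|^2\big)^{1/2}$ is sign-invariant, so a two-sided equivalence between $\|f_N\|_{L^p(\Omega;L^\Phi(T))}$ and $\|S(f)\|_{L^p(\Omega;L^\Phi(T))}$ would immediately yield UMD. The problem is that for Banach lattices this very equivalence is itself essentially \emph{equivalent} to UMD, so it cannot serve as an independent input. Your proposed workaround---deducing it from $K$-convexity and finite cotype via ``a vector-valued Burkholder transform estimate on the scalar field combined with a randomization argument''---does not close the circle: randomization and Khintchine take you from the martingale-transform inequality \emph{to} the square-function equivalence, not the other way round, and $K$-convexity together with finite cotype is not known to imply UMD even for lattices. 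The appeal to a ``Fubini-type structure of $L^p(\Omega;L^\Phi(T))$'' is likewise illusory: by the Kolmogorov--Nagumo theorem cited in the paper, $L^p(\Omega;E)=E(L^p(\Omega))$ forces $E$ to be a weighted $L^p$-space, which is exactly the case one is trying to go beyond.

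The paper circumvents both issues by a different mechanism. It does not attempt an $L^p(\Omega;L^\Phi(T))$ estimate at all; instead it uses Burkholder's characterization (Proposition~\ref{prop:chUMD}) that it suffices to bound $\E\sup_{n}\|(\epsilon*f)_n\|_{L^\Phi(T)}$ by a constant times $\sup_{n}\|f_n\|_{L^\infty(\Omega;L^\Phi(T))}$ for \emph{Paley--Walsh} martingales only. The Paley--Walsh restriction is the crux: for such $f$, the slice $f(\cdot,t)$ is, for a.e.\ $t\in T$, a scalar Paley--Walsh martingale with respect to the \emph{same} Rademacher filtration, so one may apply, pointwise in $t$, the scalar $\Phi(t,\cdot)$-version of Burkholder's good-$\lambda$ transform inequality (Lemma~\ref{lem:UMDPhi}) together with the $\Phi(t,\cdot)$-Doob inequality (Proposition~\ref{prop:DoobPhi}), both with constants depending only on the $\Delta_2$ data and hence uniform in $t$. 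Integrating the resulting modular inequality over $T$ and passing to the equivalent norm \eqref{eq:Orliczequiv} yields the required $L^\infty\to L^1$ bound. No Fubini, no vector-valued square function, and no circularity.
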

This theorem is a special case of Theorem \ref{thm:MainXvalued} below, in which we also have an estimate for the UMD constant in terms of the constants appearing in the $\Delta_{2}$ condition for $\Phi,\Psi$.
The result implies the following new result for the variable Lebesgue spaces.
\begin{corollary}
Assume $1<p_0<p_1<\infty$ and $p:T\to [p_0,p_1]$ is measurable. Then $L^{p(\cdot)}(T)$ is a UMD space.
\end{corollary}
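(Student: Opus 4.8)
The plan is to realise $L^{p(\cdot)}(T)$ as a Musielak--Orlicz space $L^{\Phi}(T)$ for which Theorem~\ref{thm:introMainXvalued} applies. Define $\Phi\colon T\times[0,\infty)\to[0,\infty]$ by $\Phi(t,s)=\frac{s^{p(t)}}{p(t)}$. For each fixed $t$ this is a finite-valued Young function in $s$ (convex, vanishing at $0$, super-linear at infinity since $p(t)>1$), and measurability of $p$ yields joint measurability, so $\Phi$ is a generalized Young function in the sense of the paper. Its Luxemburg norm $\|f\|_{\Phi}=\inf\{\lambda>0:\int_T\Phi(t,|f(t)|/\lambda)\,d\mu(t)\le 1\}$ is equivalent to the usual variable Lebesgue norm coming from the modular $f\mapsto\int_T|f(t)|^{p(t)}\,d\mu(t)$, because $p(t)\in[p_0,p_1]$ forces $\frac1{p_1}\le\frac1{p(t)}\le\frac1{p_0}$, so the factor $\frac1{p(t)}$ only changes the modular by bounded multiplicative constants. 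Hence $L^{\Phi}(T)$ and $L^{p(\cdot)}(T)$ are isomorphic as Banach spaces, and since UMD is an isomorphic invariant it suffices to show $L^{\Phi}(T)$ is UMD.

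Next I would compute the complementary Young function $\Psi$. Young's inequality $ab\le \frac{a^{p(t)}}{p(t)}+\frac{b^{p'(t)}}{p'(t)}$, with $\frac1{p(t)}+\frac1{p'(t)}=1$, together with its sharpness, identifies the pointwise conjugate of $s\mapsto \frac{s^{p(t)}}{p(t)}$ as $\Psi(t,s)=\frac{s^{p'(t)}}{p'(t)}$, so $\Phi$ and $\Psi$ are complementary in the required sense. Since $p(t)\in[p_0,p_1]\subset(1,\infty)$, the conjugate exponent satisfies $p'(t)\in[p_1',p_0']\subset(1,\infty)$. The $\Delta_2$ conditions are then immediate and uniform: $\Phi(t,2s)=2^{p(t)}\Phi(t,s)\le 2^{p_1}\Phi(t,s)$ and $\Psi(t,2s)=2^{p'(t)}\Psi(t,s)\le 2^{p_0'}\Psi(t,s)$ for all $s\ge 0$ and all $t\in T$. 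Thus $\Phi,\Psi$ are complementary generalized Young functions both satisfying $\Delta_2$, and Theorem~\ref{thm:introMainXvalued} (or Theorem~\ref{thm:MainXvalued}, if one wants the UMD constant in terms of $p_0,p_1$) gives that $L^{\Phi}(T)$, hence $L^{p(\cdot)}(T)$, is a UMD space.

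The only genuinely delicate part is bookkeeping: verifying that the pair $(\Phi,\Psi)$ matches the precise definitions in the paper of a (complementary, generalized) Young function and of the $\Delta_2$ condition — in particular whether normalisation conventions or the $\mu$-a.e.\ domain of $p$ require cosmetic adjustments — but this is routine given that $p$ takes values strictly inside $(1,\infty)$. It is worth emphasising that \emph{no} regularity of $p$ beyond measurability (e.g.\ no log-Hölder continuity) is needed here, unlike for the boundedness of the maximal operator on $L^{p(\cdot)}$: only the $\Delta_2$ bounds on $\Phi$ and $\Psi$ enter, and those follow solely from $p_0\le p(\cdot)\le p_1$.
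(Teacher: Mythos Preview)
Your proof is correct and follows essentially the same route as the paper: realise $L^{p(\cdot)}(T)$ as a Musielak--Orlicz space, verify the $\Delta_2$ condition for both the Young function and its complement using only $p_0\le p(\cdot)\le p_1$, and invoke Theorem~\ref{thm:introMainXvalued}. The only cosmetic difference is that the paper works with $\Phi(t,\lambda)=\lambda^{p(t)}$ (see Examples~\ref{ex:Young_function_variable_Lebesgue} and~\ref{ex:Young_function_variable_Lebesgue;Delta2&complemented_function}) rather than your normalised $\Phi(t,s)=s^{p(t)}/p(t)$; your choice has the mild advantage that the complementary function is exactly $\Psi(t,s)=s^{p'(t)}/p'(t)$, whereas the Legendre transform of $\lambda^{p(t)}$ is $c_{p(t)}x^{p'(t)}$ for a $t$-dependent constant, though this is immaterial for the $\Delta_2$ verification.
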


 In the case the measure space is divisible, one can actually
characterize the UMD property in terms of $\Delta_2$ and even in
terms of reflexivity (see Corollary \ref{cor:Orlicz} below).

In the Orlicz setting (i.e.\ $\Phi$ does not dependent on $T$) the
noncommutative analogue of \cite{FG91,Liu} was obtained in
\cite[Corollary 1.8]{DoddSuk}. It would be interesting to obtain
the noncommutative analogues of our results as well. Details on
noncommutative analysis and interpolation theory can be found in
the forthcoming book~\cite{doddstheory}.

\subsubsection*{Notation} For a number $p\in [1, \infty]$ we write $p'\in [1, \infty]$ for its H\"older conjugate which satisfies $\frac{1}{p} + \frac{1}{p'}=1$. For a random variable $f$, $\E(f)$ denotes the expectation of $f$.

\subsection*{Acknowledgment}
The authors would like to thank Emiel Lorist and Jan van Neerven for helpful comments.

\section{Preliminaries}

\subsection{Musielak--Orlicz spaces}
For details on Orlicz spaces we refer to \cite{KrasnoselRu,RGMP}
and references therein. Details on Musielak--Orlicz spaces can be
found in \cite{Musielak,DHHR,Koz,Zaa,Koz77}.

Let $X$ be a Banach space and let $(T, \Sigma,\mu)$ be a $\sigma$-finite measure space. We say that a measurable function $\Phi:T\times [0,\infty)\to [0,\infty]$ is a {\em Young function} if for each $t\in T$,
\begin{enumerate}[(i)]
\item $\Phi(t,0) = 0$, $\exists x_1, x_2>0$ s.t.\ $\Phi(t,x_1)>0$
and $\Phi(t,x_2)<\infty$; \item $\Phi(t,\cdot)$ is increasing,
convex and left-continuous.
\end{enumerate}
As a consequence of the above $\lim_{x\to \infty}\Phi(t,x) = \infty$.

A function $\Phi$ with the above properties is a.e.\ differentiable, the right-derivative $\varphi := \partial_x \Phi$ is increasing and
\[\Phi(t,x) = \int_0^x \varphi(t,\lambda) d\lambda, \ \ \ t\in T, \ x\in \R_+.\]
Note that the function $\varphi(t,\cdot)$ has a right-continuous
version since any increasing function has at most countably
many discontinuities, so $\varphi(t, \lambda) = \lim_{\eps\to 0}
\varphi(t,\lambda+\eps)$ for each $t\in T$ for a.e.\ $\lambda \in
[0,\infty)$.

For a strongly measurable function $f:T\to X$ we say that $f\in
L^{\Phi}(T;X)$ if there exists a $\lambda>0$ such that
\[
\int_T \Phi(t,\|f(t)\|_X/\lambda) \ud\mu(t)<\infty.
\]
The space $L^{\Phi}(T;X)$ equipped with the norm
\begin{equation}\label{eq:defofMOnorm}
\|f\|_{L^{\Phi}(T;X)}:= \inf\Big\{\lambda>0:\int_T
\Phi(t,\|f(t)\|_{X}/\lambda) \ud\mu(t)\leq 1\Big\}
\end{equation}
is a Banach space. Here as usual we identify functions which are almost everywhere identical.
The space $L^{\Phi}(T;X)$ is called the $X$-valued {\em Musielak-Orlicz space} associated with $\Phi$.

The following norm will also be useful in the sequel.
\begin{equation}\label{eq:Orliczequiv}
\|f\|_{X,\Phi}:=\inf_{\lambda>0} \frac{1}{\lambda} \Big[1+\int_{T} \Phi(t,\lambda \|f(t)\|_X) \ud\mu(t)\Big].
\end{equation}
It is simple to check that this gives an equivalent norm (see \cite[Lemma 2.1]{HytVer})
\begin{equation}\label{eq:OrliczLux}
\|f\|_{L^\Phi(T;X)}\leq \|f\|_{X,\Phi} \leq 2\|f\|_{L^\Phi(T;X)}.
\end{equation}
In case $X = \R$ or $X = \C$, we write $L^{\Phi}(T)$ for the above space.

\begin{example}\label{ex:Young_function_variable_Lebesgue}
Let $p:T\to [1, \infty)$ be a measurable function and let
$\Phi(t,\lambda) = |\lambda|^{p(t)}$. Then
$L^{\Phi}(T)$ coincides with the variable Lebesgue space
$L^{p(\cdot)}$.
\end{example}

Next we recall condition $\Delta_2$ from \cite[Theorem
8.13]{Musielak}. There it was used to study the dual space of the
Musielak--Orlicz space and to prove uniform convexity and in
particular reflexivity (see \cite[Section 11]{Musielak}).
Let $L^1_{+}(T)\subset L^1(T)$ be the set of all nonnegative
integrable functions.
\begin{definition}
A Young function $\Phi:T\times [0,\infty)\to [0,\infty]$ is said
to be in $\Delta_2$ if there exists a $K>1$ and an $h\in
L^1_+(T)$ such that for a.a.\ $t \in T$
\[
\Phi(t,2\lambda) \leq K\Phi(t,\lambda) + h(t), \ \ \lambda\in [0,\infty).
\]
\end{definition}
Note that $\Phi\in \Delta_2$ implies that $\Phi(t,\lambda)<\infty$ for almost all $t\in T$ and all $\lambda\in [0,\infty)$.
Unlike is standard for Young's function independent of $T$,
the condition $\Delta_2$ depends on the measure space; namely, if one has that
$\mu(T)=\infty$ and $h$ does not depend on $t\in T$, then $h=0$.

If $\Phi:T\times [0,\infty)\to [0,\infty]$ is a Young function we define its {\em complemented function} $\Psi:T\times [0,\infty)\to [0,\infty]$ by the Legendre transform
\[\Psi(t,x) = \sup_{y\geq 0} (xy - \Phi(t,y)).\]
Then $\Psi$ is a Young function as well. Moreover, one can check that the complemented function of $\Psi(t,\cdot)$ equals $\Phi(t,\cdot)$.

\begin{example}\label{ex:Young_function_variable_Lebesgue;Delta2&complemented_function}
Let the notations be as in Example~\ref{ex:Young_function_variable_Lebesgue}. Then the following statements hold.
\begin{enumerate}[(i)]
\item\label{it:ex:Young_function_variable_Lebesgue;Delta2&complemented_function;Delta2} $\Phi$ is in $\Delta_{2}$ if and only if $p \in L^{\infty}(T)$, in which case $\Phi$ satisfies the $\Delta_{2}$-condition with $K=2^{\norm{p}_{\infty}}$ and $h=0$.
\item The complemented function $\Psi$ to $\Phi$ is given by
\[
\Psi(t,x) = x^{p'(t)}\one_{\{p>1\} \times [0,\infty)}(t,x) + \infty\cdot \one_{\{p=1\} \times (1,\infty)}(t,x),
\]
where $p'(t)=p(t)'$ is the H\"older conjugate.
\end{enumerate}
In particular, $\Phi$ and $\Psi$ are both in $\Delta_{2}$ if and only if $1< \essinf p \leq \esssup p < \infty$, in which case $\Psi(t,x) = x^{p'(t)}$ for a.a.\ $t \in T$ and all $x \in [0,\infty)$.
\end{example}
\begin{proof}
Let us only give the proof of \eqref{it:ex:Young_function_variable_Lebesgue;Delta2&complemented_function;Delta2}.
If $p \in L^{\infty}(T)$, then, for a.a.\ $t \in T$,
\[
\Phi(t, 2\lambda) = 2^{p(t)} \Phi(t,\lambda) \leq 2^{\|p\|_{\infty}} \Phi(t,\lambda), \;\;\; \lambda \in [0,\infty).
\]
Conversely assume that $\Phi$ is in $\Delta_{2}$. Let $K$ and $h$ be as in the $\Delta_{2}$ condition for $\Phi$. Then, for a.a.\ $t \in T$ and all $\lambda \in [0,\infty)$,
\[
2^{p(t)}\Phi(t,\lambda) = \Phi(t, 2\lambda) \leq K\Phi(t,\lambda) + h(t)
\]
and thus
\[
(2^{p(t)}-K)\Phi(t,\lambda) \leq  h(t).
\]
As $\lim_{\lambda \to \infty}\Phi(t,\lambda) = \infty$, this implies that $2^{p(t)} \leq K$ for a.a.\ $t \in T$. Hence, $p \in L^{\infty}(T)$.
\end{proof}

By the properties of the functions $\Phi$ and $\Psi$ one can check
that for $\varphi = \partial_x\Phi$ and $\psi=\partial_x \Psi$
(where $\varphi$ and $\psi$ are taken right-continuous in $x$), we have
$\varphi^{-1}(t,\cdot) = \psi(t,\cdot)$, where
\begin{equation}\label{eq:rightcont}
\varphi^{-1}(t,y)  =   \sup\{x: \varphi(t,x)\leq y\}\;\;\; y\geq
0.
\end{equation}
Note that $\psi(t,\varphi(t,x))\geq x$ and $\varphi(t,\psi(t,x))\geq x$ because of the above choices.

Recall Young's inequality (see \cite[Section I.2]{KrasnoselRu}  or \cite[Proposition 15.1.2]{RGMP}) for a.a.\ $t\in T$,
\begin{equation}\label{eq:Young}
xy \leq \Phi(t,x) + \Psi(t,y), \ \ x,y\geq0
\end{equation}
with equality if and only if $y = \varphi(t,x)$ or $x = \psi(t,y)$.

\begin{lemma}\label{lem:tobeused}
Let $\Phi:T\times[0,\infty)\to [0,\infty)$ be a Young function and
let $\Psi$ be its complemented function. If $\Phi\in \Delta_2$ with constant $K>1$ and $h\in L_{+}^1(T)$, then for almost all $t\in T$,
\[\Psi(t,\lambda) \leq \frac{K}{K-1} \lambda \psi(t,\lambda) + \frac{1}{K}h(t), \ \ \lambda\geq 0.\]
\end{lemma}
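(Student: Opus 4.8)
The plan is to combine the equality case of Young's inequality \eqref{eq:Young} with the $\Delta_2$ condition applied at the \emph{doubled} point $2\psi(t,\lambda)$. Fix $t$ in the set of full measure on which $h(t)<\infty$, the $\Delta_2$ inequality holds, and \eqref{eq:Young} holds for all $x,y\ge 0$. We may assume $\psi(t,\lambda)<\infty$, since otherwise the right-hand side of the asserted inequality is $+\infty$ and there is nothing to prove; note that then $\Psi(t,\lambda)=\int_0^\lambda\psi(t,s)\,\ud s\le\lambda\,\psi(t,\lambda)<\infty$ because $\psi(t,\cdot)$ is increasing. Applying the equality case of \eqref{eq:Young} with $x=\psi(t,\lambda)$ and $y=\lambda$ (so that $x=\psi(t,y)$) gives
\[
\Psi(t,\lambda)=\lambda\,\psi(t,\lambda)-\Phi\big(t,\psi(t,\lambda)\big),
\]
so the whole problem reduces to bounding $\Phi(t,\psi(t,\lambda))$ from below.

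Write $y_0:=\psi(t,\lambda)$. Since $\varphi(t,\cdot)$ is increasing and $\varphi(t,\psi(t,\lambda))\ge\lambda$ (the inequality noted after \eqref{eq:rightcont}),
\[
\Phi(t,2y_0)\ \ge\ \int_{y_0}^{2y_0}\varphi(t,s)\,\ud s\ \ge\ y_0\,\varphi(t,y_0)\ \ge\ \lambda\,\psi(t,\lambda).
\]
On the other hand $\Delta_2$ gives $\Phi(t,2y_0)\le K\Phi(t,y_0)+h(t)$, hence $\lambda\,\psi(t,\lambda)\le K\,\Phi(t,\psi(t,\lambda))+h(t)$, i.e.\ $\Phi(t,\psi(t,\lambda))\ge\frac1K\big(\lambda\,\psi(t,\lambda)-h(t)\big)$. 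Feeding this into the displayed identity,
\[
\Psi(t,\lambda)\ \le\ \lambda\,\psi(t,\lambda)-\tfrac1K\big(\lambda\,\psi(t,\lambda)-h(t)\big)\ =\ \tfrac{K-1}{K}\,\lambda\,\psi(t,\lambda)+\tfrac1K h(t)\ \le\ \tfrac{K}{K-1}\,\lambda\,\psi(t,\lambda)+\tfrac1K h(t),
\]
where the last step uses that $K>1$ forces $(K-1)^2\le K^2$, i.e.\ $\tfrac{K-1}{K}\le\tfrac{K}{K-1}$. This is the claim.

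The only genuinely delicate step is the display bounding $\Phi(t,2y_0)$ from below: dropping the $\Delta_2$ input one recovers merely the trivial $\Psi(t,\lambda)\le\lambda\,\psi(t,\lambda)$ from monotonicity of $\psi(t,\cdot)$, so the real content is that the convexity estimate $\Phi(t,2y_0)\ge\int_{y_0}^{2y_0}\varphi(t,s)\,\ud s\ge y_0\,\varphi(t,y_0)$ together with $\Delta_2$ pins $\Phi(t,\psi(t,\lambda))$ to a fixed fraction of $\lambda\,\psi(t,\lambda)$ (an alternative bookkeeping, keeping the term $\Phi(t,y_0)$ in the lower bound for $\Phi(t,2y_0)$, gives the variant $\Psi(t,\lambda)\le\tfrac{K-2}{K-1}\lambda\,\psi(t,\lambda)+\tfrac1{K-1}h(t)$). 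The rest is routine: the right-derivatives $\varphi(t,\cdot)$ and $\psi(t,\cdot)$ exist at every point, are increasing and right-continuous, and are precisely the versions for which the equality case of \eqref{eq:Young} and the bound $\varphi(t,\psi(t,\lambda))\ge\lambda$ were set up in the preliminaries; and the single $\mu$-null exceptional set in $t$ is exactly what "for almost all $t$" permits, the estimate then holding for every $\lambda\ge 0$.
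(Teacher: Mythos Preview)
Your proof is correct and follows essentially the same approach as the paper: both combine the equality case of Young's inequality, the convexity estimate $\Phi(t,2y)\geq\int_{y}^{2y}\varphi(t,s)\,\ud s\geq y\varphi(t,y)$, and the $\Delta_2$ condition to arrive at the sharper bound $\Psi(t,\lambda)\leq\frac{K-1}{K}\lambda\psi(t,\lambda)+\frac{1}{K}h(t)$, which you then weaken to the stated $\frac{K}{K-1}$. The only organizational difference is that the paper first proves the inequality in the variable $\varphi(t,\lambda)$ and then substitutes $\lambda=\psi(t,x)$, which forces an appeal to the monotonicity of $y\mapsto\Psi(t,y)/y$ to pass from $\varphi(t,\psi(t,x))$ back to $x$; by working directly at $y_0=\psi(t,\lambda)$ from the start you avoid that detour, which is a mild simplification.
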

\begin{proof}
We use a variation of the argument in \cite[Section
1.4]{KrasnoselRu}. By the $\Delta_2$ condition there exist $K>1$
and $h\in L^1_{+}(T)$ such that for almost all $t\in T$ and all $\lambda \geq
0$
\[
K\Phi(t,\lambda) + h(t)\geq \Phi(t,2\lambda) = \int_0^{2\lambda}
\varphi(t,x) dx \geq \int_{\lambda}^{2\lambda} \varphi(t,x) dx
\geq \lambda\varphi(t,\lambda),
\]
where we used the fact that
$\varphi(t,\cdot)$ is increasing. Using the identity case of
\eqref{eq:Young} we obtain
\[
K\lambda \varphi(t,\lambda) - K\Psi(t,\varphi(t,\lambda))+ h(t)
\geq \lambda\varphi(t,\lambda)
\]
Therefore,
\[\frac{K\Psi(t,\varphi(t,\lambda))}{\lambda\varphi(t,\lambda)} \leq K-1 + \frac{h(t)}{\lambda\varphi(t,\lambda)}.\]
Taking $\lambda = \psi(t,x)$ and using the estimates below
\eqref{eq:rightcont} and the fact that $y\mapsto
\frac{\Psi(t,y)}{y}$ is increasing (see \cite[(1.18)]{KrasnoselRu}) we
obtain
\begin{align*}
\frac{K \Psi(t,x)}{x \psi(t,x)} & \leq \frac{K\Psi(t,\varphi(t,\psi(t,x)))}{\psi(t,x)\varphi(t,\psi(t,x))}
\\ & \leq K-1 + \frac{h(t)}{\psi(t,x) \varphi(t,\psi(t,x))}\leq K-1 + \frac{h(t)}{x \psi(t,x)}.
\end{align*}
We may conclude that
\[\Psi(t,x)\leq \frac{K-1}{K} x \psi(t,x) + \frac{1}{K} h(t). \qedhere\]
\end{proof}

\subsection{UMD spaces}
For details on UMD spaces the reader is referred to
\cite{Burk01,Rub} and the monographs \cite{HNVW1,HNVW2}. Let
$(\Omega, \mathcal{A}, (\F_n)_{n\geq 0}, \P)$ denote a filtered
probability space which is rich enough in the sense that it supports an i.i.d.\ sequence $(\varepsilon_n)_{n\geq 0}$ such that
$\P(\varepsilon_n=1) = \P(\varepsilon_{n}=-1)=\frac12$ for each $n \geq 0$. Such a
sequence is called a Rademacher sequence.

For a sequence of random variables $f=(f_n)_{n\geq 0}$ with values
in $X$, we write $f_n^* = \sup_{k\leq n} \|f_k\|_X$ and $f^* =
\sup_{k\geq 0} \|f_k\|_X$. Moreover, if $\epsilon = (\epsilon_n)_{n\geq 0}$ is a sequence
of signs, we write $(\epsilon*f)_n = \sum_{k=0}^n \epsilon_k(f_k -
f_{k-1})$, where $f_{-1} = 0$.

We say that $X$ is a UMD space if there exists a $p\in (1, \infty)$ and $\beta\in [1, \infty)$ such that for all $L^p$-martingales $f = (f_n)_{n\geq 0}$ and all sequences of signs $\epsilon = (\epsilon_n)_{n\geq 0}$ we have that
\[
\|(\epsilon*f)_n\|_{L^p(\Omega;X)}\leq \beta \|f_n\|_{L^p(\Omega;X)}, \ \ \  n\geq 0,
\]
where the least admissible constant $\beta$ is denoted by $\beta_{p,X}$ and is called {\em the UMD constant}.
If the above holds for some $p\in (1, \infty)$, then it holds for all $p\in (1, \infty)$. Examples and counterexamples of UMD spaces have been mentioned in the introduction. Every UMD space is (super-)reflexive (see \cite[Theorem 4.3.8]{HNVW1}).

We say that $f=(f)_{n \geq 0}$ is a {\em Paley--Walsh martingale} if $f$ is a
martingale with respect to the filtration $(\F_n)_{n\geq 0}$ with $\F_0 = \{\emptyset, \Omega\}$ and
$\F_n = \sigma\{\varepsilon_k:1\leq k\leq n\}$ for some Rademacher sequence
$(\varepsilon_k)_{k\geq 0}$ and if $f_0 =0$.

The following result follows from \cite[Theorems 1.1 and 3.2]{Burk860}.
\begin{proposition}\label{prop:chUMD}
Let $X$ be a Banach space. Then $X$ is a UMD space if and only if
for all Paley--Walsh martingales $f$ and all sequences of signs $\epsilon$ we
have
\[\sup_{n\geq 0}\|f_n\|_{L^\infty(\Omega;X)}<\infty \ \Longrightarrow \ \P(\sup_{n\geq 0}\|(\epsilon*f)_n\|_X<\infty) >0.\]
\end{proposition}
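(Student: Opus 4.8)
The plan is to prove the two implications of the equivalence separately. The implication ``$X$ is UMD $\Rightarrow$ the stated convergence property'' is elementary and uses only the defining martingale transform inequality together with Doob's maximal inequality. The reverse implication is the substantial one; here I would simply invoke the cited results of~\cite{Burk860}, namely Burkholder's identification of the UMD property with the geometric notion of $\zeta$-convexity and his construction showing that a non-$\zeta$-convex space already fails the property on the Paley--Walsh system.

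\textbf{The easy direction.} Suppose $X$ is UMD, let $f=(f_n)_{n\ge 0}$ be a Paley--Walsh martingale with $M:=\sup_{n\ge 0}\|f_n\|_{L^\infty(\Omega;X)}<\infty$, and let $\epsilon=(\epsilon_n)_{n\ge 0}$ be a sequence of signs. Each $\epsilon_n$ is deterministic, hence $\F_{n-1}$-measurable, so $g:=\epsilon*f$ is a martingale transform of $f$; moreover $f$ is an $L^2$-martingale with $\sup_n\|f_n\|_{L^2(\Omega;X)}\le M$. The UMD inequality with $p=2$ gives $\|g_n\|_{L^2(\Omega;X)}\le\beta_{2,X}M$ for every $n$. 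Since $x\mapsto\|x\|_X$ is convex, $(\|g_n\|_X)_{n\ge 0}$ is a nonnegative submartingale, so with $g_n^*:=\sup_{k\le n}\|g_k\|_X$ Doob's $L^2$-maximal inequality yields
\[
\|g_n^*\|_{L^2(\Omega)}\le 2\,\|g_n\|_{L^2(\Omega;X)}\le 2\beta_{2,X}M .
\]
Letting $n\to\infty$ and using monotone convergence, $\E\bigl(\sup_{n\ge 0}\|g_n\|_X\bigr)^2\le 4\beta_{2,X}^2M^2<\infty$, so $\sup_{n\ge 0}\|g_n\|_X<\infty$ almost surely --- which is considerably stronger than the asserted positivity of the probability. (Alternatively: the $L^2$-boundedness of $g$ together with reflexivity, hence the Radon--Nikod\'ym property, of the UMD space $X$ gives almost sure convergence of $g$ directly.)

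\textbf{The hard direction.} Here I would argue by contraposition: assuming $X$ is not UMD, I want to produce one Paley--Walsh martingale $f$ bounded in $L^\infty(\Omega;X)$ and one sequence of signs $\epsilon$ with $\P\bigl(\sup_n\|(\epsilon*f)_n\|_X<\infty\bigr)=0$. This is exactly what~\cite[Theorems 1.1 and 3.2]{Burk860} supplies: one theorem identifies the UMD property with $\zeta$-convexity of $X$ (the existence of a biconvex ``Burkholder function'' on $X\times X$ that is strictly positive at the origin and is dominated by $\|x+y\|_X$ on the product of unit spheres), and the other shows that if $X$ is not $\zeta$-convex then this failure is already witnessed by an $L^\infty$-bounded Paley--Walsh martingale whose transform by a suitable $\pm1$-valued sequence is almost surely unbounded. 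Chaining these, a non-UMD space yields precisely the counterexample martingale, contradicting the hypothesis; hence the hypothesis forces $X$ to be UMD. The genuine difficulty is this last construction --- the negative part of Burkholder's geometric theory --- which I would not reprove; the remaining work on our side is only bookkeeping: confirming that Burkholder's example is a Paley--Walsh martingale in the sense defined above, and that ``almost surely unbounded'' is the negation of the event in the statement (invoking the relevant zero--one argument, also contained in the cited results, so that one gets probability exactly zero rather than merely ``not probability one'').
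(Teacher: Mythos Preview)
Your proposal is correct and matches the paper's approach: the paper gives no independent proof at all, simply stating that the result ``follows from \cite[Theorems 1.1 and 3.2]{Burk860}'', and your hard direction invokes exactly those theorems in the intended way. Your explicit elementary argument for the easy direction (UMD $\Rightarrow$ the property, via the $L^2$-UMD inequality and Doob's maximal inequality) is a useful addition beyond the paper's bare citation, and is correct as written.
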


We will also need the following lemma which allows to estimate the
$\epsilon$-transform for different functions than $\Phi(x) =
|x|^p$. This lemma is a straightforward extension \cite[p.\
1001]{Bu1} where the case $b=0$ was considered. Moreover, since we
only state it for Paley--Walsh martingales it follows from
\cite[Proof of (10)]{Burk01}.

\begin{lemma}\label{lem:UMDPhi}
Assume $X$ is a UMD space. Let $\Phi:[0,\infty)\to [0,\infty)$ be a Young function and
assume that there exist constants $K>1$ and $b\geq 0$ such that
\begin{equation}\label{eq:lemUMDPhi}
\Phi(2\lambda) \leq K\Phi(\lambda) + b,\;\;\;\lambda\geq 0.
\end{equation}
Let $f=(f_{n})_{n \geq 0}$ be a Paley--Walsh martingale,
$\epsilon=(\epsilon_n)_{n\geq 0}$ be a sequence of signs, and set
$g := \epsilon *f$. Then there exists a constant $C_{K,X}\geq 0$
only depending on $K$ and (the UMD constant of) $X$ such that
\[
\E \Phi(g^*)\leq C_{K,X} (\E\Phi(f^*) + b).
\]
\end{lemma}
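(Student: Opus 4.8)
The plan is to run Burkholder's good-$\lambda$ argument, as in \cite[Proof of (10)]{Burk01} and \cite[p.~1001]{Bu1}, and to carry the additive constant $b$ through it. First I would reduce to Paley--Walsh martingales with only finitely many nonzero differences: if $f^{(N)}=(f_{n\wedge N})_{n\ge 0}$ denotes the $N$-th truncation, then its $\epsilon$-transform is $(g_{n\wedge N})_{n\ge 0}$, and $(f^{(N)})^{*}=f_{N}^{*}\uparrow f^{*}$, $(g^{(N)})^{*}=g_{N}^{*}\uparrow g^{*}$, so the general case follows by monotone convergence once the finite case is known ($\Phi$ is finite-valued, hence continuous). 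For a finite Paley--Walsh martingale one has $g^{*}\le 2(N+1)f^{*}$, so by the iterated $\Delta_{2}$ bound recalled below $\E\Phi(g^{*})<\infty$ whenever $\E\Phi(f^{*})<\infty$; and if $\E\Phi(f^{*})=\infty$ there is nothing to prove. We may therefore assume $\E\Phi(g^{*})<\infty$ throughout.

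\emph{Step 1 (a good-$\lambda$ inequality).} Since $X$ is UMD, $\epsilon$-transforms are bounded on $L^{2}(\Omega;X)$ with constant $\beta_{2,X}$, so by Doob's maximal inequality $\|g^{*}\|_{L^{2}(\Omega;X)}\le 2\beta_{2,X}\|f^{*}\|_{L^{2}(\Omega;X)}$ for every Paley--Walsh martingale $f$, every sequence of signs $\epsilon$, and $g:=\epsilon*f$. From this one obtains, by the usual stopping-time argument, a constant $\beta>1$ and a function $\eta\colon(0,1)\to(0,\infty)$ with $\eta(\delta)\to 0$ as $\delta\to 0$, both depending only on $X$, such that for all $\lambda>0$
\[
\P\big(g^{*}>\beta\lambda,\ f^{*}\le\delta\lambda\big)\ \le\ \eta(\delta)\,\P\big(g^{*}>\lambda\big).
\]
Here one stops $f$ at the first time $\|f_{n}\|_{X}$ exceeds $\delta\lambda$, restarts the transform at the first time $\|g_{n}\|_{X}$ exceeds $\lambda$, and applies the $L^{2}$-estimate conditionally on that time together with Chebyshev's inequality; that the norms $\|f_{n}-f_{n-1}\|_{X}$ of the differences of a Paley--Walsh martingale are $\F_{n-1}$-measurable makes the estimate for the single difference at the stopping time routine (see \cite{Burk01,Bu1} and \cite[Ch.~4]{HNVW1}). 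In particular $\P(g^{*}>\beta\lambda)\le\eta(\delta)\,\P(g^{*}>\lambda)+\P(f^{*}>\delta\lambda)$ for all $\lambda>0$. I expect this step to be the crux; the remaining steps are formal.

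\emph{Step 2 (integration and absorption).} Using the layer-cake formula $\E\Phi(h)=\int_{[0,\infty)}\P(h>u)\ud\Phi(u)$ for nonnegative $h$, a change of variables, and the last inequality of Step 1, one obtains
\[
\E\Phi(g^{*})\ \le\ \eta(\delta)\,\E\Phi(\beta g^{*})+\E\Phi\big((\beta/\delta)f^{*}\big).
\]
Iterating \eqref{eq:lemUMDPhi} gives $\Phi(2^{k}\lambda)\le K^{k}\Phi(\lambda)+\tfrac{K^{k}-1}{K-1}b$ for all $k\in\N$ and $\lambda\ge 0$, so with $k:=\lceil\log_{2}\beta\rceil$ and $k':=\lceil\log_{2}(\beta/\delta)\rceil$,
\[
\E\Phi(\beta g^{*})\le K^{k}\E\Phi(g^{*})+\tfrac{K^{k}-1}{K-1}b,\qquad
\E\Phi\big((\beta/\delta)f^{*}\big)\le K^{k'}\E\Phi(f^{*})+\tfrac{K^{k'}-1}{K-1}b.
\]
Because $\beta$, and hence $k$, depends only on $X$ and $\eta(\delta)\to 0$, I may fix $\delta=\delta(K,X)\in(0,1)$ with $\eta(\delta)K^{k}\le\tfrac12$. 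Substituting the last two displays into the first and using $\E\Phi(g^{*})<\infty$ to absorb $\tfrac12\E\Phi(g^{*})$ into the left-hand side gives
\[
\E\Phi(g^{*})\ \le\ 2K^{k'}\E\Phi(f^{*})+2\,\tfrac{K^{k}+K^{k'}-2}{K-1}b\ \le\ C_{K,X}\big(\E\Phi(f^{*})+b\big)
\]
with $C_{K,X}:=2K^{k'}+2\tfrac{K^{k}+K^{k'}-2}{K-1}$, which depends only on $K$ and $X$ since $k,k'$ do. Letting $N\to\infty$ in the reduction of the first paragraph completes the proof.
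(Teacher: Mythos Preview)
Your proof is correct and follows essentially the same route the paper points to: the good-$\lambda$ method of Burkholder \cite{Bu1,Burk01}, with the additive constant $b$ carried through the iteration of \eqref{eq:lemUMDPhi} and the absorption step. The paper does not spell out a proof in the main text but refers to \cite[p.~1001]{Bu1} and \cite[Proof of (10)]{Burk01}; its Remark~\ref{rem:oderofCKX} then makes the constants explicit via the weak-type inequality \cite[(1.8)]{Bu1} with $\beta=2$ and the bound $c\le 4/\zeta(0,0)$, whereas you derive the good-$\lambda$ inequality from the $L^{2}$-bound and Doob's inequality. This is a harmless variation of the same argument; your resulting constant $C_{K,X}$ has the same qualitative dependence on $K$ and $X$ as the one in Remark~\ref{rem:oderofCKX}, only with a different numerical shape.
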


\begin{remark}
To obtain Lemma \ref{lem:UMDPhi} in the case of general
martingales (as it is done in \cite[p.\ 1001]{Bu1}), one can use
the Davis decomposition to reduce to a bad part and a good part of
$f$. To estimate the bad part of the Davis decomposition one can
use \cite[Theorem 3.2 and the proof of Theorem 2.1]{BDG} (see
\cite[Proposition A-3-5]{Nev} and \cite[Theorem 53]{Mey} for a
simpler proof).
\end{remark}

Recall that $X$ is a UMD space if and only if it is {\em
$\zeta$-convex}, i.e.\ there exists a biconvex function $\zeta:X
\times X \to \mathbb R$ such that $\zeta(0,0)>0$ and $\zeta(x, y)
\leq \|x+y\|$ for all $x, y\in X$ with $\|x\| = \|y\|=1$ (see
\cite{HNVW1,Burk860,Bu83}). By the $\zeta$-function we will usually
mean the {\em optimal $\zeta$-function} which can be defined as the
supremum over all admissible $\zeta$'s, and this obviously satisfies
the required conditions.

The following theorem can be found in \cite[equation (20)]{Burk01}.
\begin{theorem}(Burkholder)\label{rem:zetafunction}
Let $X$ be a UMD Banach space and let $\zeta:X\times X \to \mathbb R$ be an optimal $\zeta$-function (i.e.\ $\zeta(0,0)$ is maximal). For any $1<p<\infty$ one then has that
\begin{equation}\label{eq:betavszeta}
\frac{1}{\zeta(0,0)} \leq \beta_{p, X} \leq \frac{72}{\zeta(0,0)}
\frac{(p+1)^2}{p-1}.
\end{equation}
\end{theorem}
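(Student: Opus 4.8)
The plan is to prove the two inequalities separately; they are the two halves of Burkholder's equivalence ``$X$ is UMD $\iff$ $X$ is $\zeta$-convex'' read with constants. The left-hand bound $\tfrac{1}{\zeta(0,0)}\le\beta_{p,X}$ is the soft half: since the optimal $\zeta$ dominates every admissible $\zeta$-function, it suffices to produce one biconvex $\zeta_{0}\colon X\times X\to\R$ with $\zeta_{0}(x,y)\le\|x+y\|$ for $\|x\|=\|y\|=1$ and $\zeta_{0}(0,0)\ge 1/\beta_{p,X}$. I would take for $\zeta_{0}$ a version of Burkholder's function attached to the UMD inequality (cf.\ \cite{Burk860,Bu83}), defined by an infimum of $L^{p}(\Omega;X)$-norms of sign transforms of martingale difference sequences under a suitable normalization of the companion martingale. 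Three points have to be checked: biconvexity, via the usual ``coin flip at time zero'' coupling of two near-optimal competitors; the boundary inequality for unit vectors, by testing with a trivial competitor; and the bound $\zeta_{0}(0,0)\ge 1/\beta_{p,X}$, which is the UMD inequality applied to the martingale with differences $\epsilon_{k}d_{k}$ --- whose $\epsilon$-transform is $\sum_{k}d_{k}$, since a fixed $\pm1$-transform is an involution.

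The right-hand bound $\beta_{p,X}\le\tfrac{72}{\zeta(0,0)}\tfrac{(p+1)^{2}}{p-1}$ is the substantial half; write $c=\zeta(0,0)$. From the biconvexity of $\zeta$ and the unit-sphere bound $\zeta\le\|\cdot+\cdot\|$, I would manufacture Burkholder's \emph{special function} $U\colon X\times X\to\R$: the least function that (i) dominates a target of the form $\|y\|_{X}^{p}-\beta^{p}\|x\|_{X}^{p}$ with $\beta$ controlled by $c^{-1}$ and $p$, (ii) is \emph{good for $\pm1$-transforms}, i.e.\ $n\mapsto U(f_{n},g_{n})$ is a supermartingale whenever $g$ is a sign transform of a martingale $f$, and (iii) satisfies $U(x,\pm x)\le0$ for all $x$; biconvexity of $\zeta$ is precisely what makes this construction finite and forces (iii). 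Then, by a standard reduction (cf.\ Proposition~\ref{prop:chUMD}) it is enough to treat Paley--Walsh martingales, and for such an $f$ with $g=\epsilon*f$ one iterates the supermartingale property to get
\[
\E\bigl[\|g_{n}\|_{X}^{p}-\beta^{p}\|f_{n}\|_{X}^{p}\bigr]\le\E\,U(f_{n},g_{n})\le U(f_{0},g_{0})=U(0,0)\le0,
\]
hence $\|g_{n}\|_{L^{p}(\Omega;X)}\le\beta\|f_{n}\|_{L^{p}(\Omega;X)}$ and $\beta_{p,X}\le\beta$. Tracking constants throughout: the factor $\tfrac{(p+1)^{2}}{p-1}$ is the cost of bridging the degree-$1$ datum $\zeta$ to a majorant homogeneous of degree $p$ (it grows like $(p-1)^{-1}$ as $p\downarrow1$ and like $p$ as $p\uparrow\infty$, the usual shape), and the numerical factor $72$ comes from collecting the constants in Doob's inequality and in the elementary convexity estimates used to build $U$.

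The step I expect to be the main obstacle is the construction of $U$: producing a function with properties (i)--(iii) out of the sole data that $\zeta$ is biconvex, strictly positive at the origin, and bounded by $\|\cdot+\cdot\|$ on the product of the unit spheres, while losing only the advertised factor $\tfrac{72(p+1)^{2}}{p-1}$ in the constant rather than an uncontrolled amount. This is exactly what is carried out in \cite[eq.~(20)]{Burk01} and the papers it rests on; once $U$ is available, the supermartingale iteration, the reduction to Paley--Walsh martingales, and the coupling arguments of the first part are all routine.
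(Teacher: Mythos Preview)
The paper does not give a proof of this theorem at all; it is simply quoted as Burkholder's result with the reference ``can be found in \cite[equation (20)]{Burk01}''. So there is nothing in the paper to compare your argument against beyond checking that your sketch is consistent with what Burkholder actually does.

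On that score, your outline is accurate and follows Burkholder's own route: the lower bound via the construction of an explicit biconvex function $\zeta_{0}$ from the UMD inequality (this is \cite{Burk860}), and the upper bound via a special supermartingale function $U$ built out of the $\zeta$-convexity datum (this is \cite{Bu83}, with the constants tracked in \cite{Burk01}). Your identification of the construction of $U$ as the main obstacle is correct, and your description of where the factors $72$ and $(p+1)^{2}/(p-1)$ arise is qualitatively right, though in Burkholder's actual argument the $p$-dependence comes in through a chain of extrapolation and good-$\lambda$ estimates rather than a single homogeneity step. One small remark: your description of $\zeta_{0}$ as ``an infimum of $L^{p}$-norms'' is not quite the formula Burkholder uses in \cite{Burk860}; there $\zeta_{0}(x,y)$ is defined via a probability of the form $1-2\inf\P(\|z_{n}\|>1)$ over certain zigzag martingales, which is what makes the boundary condition and biconvexity transparent. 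Either way, since the paper treats this as a black-box citation, your proposal already contains more than what the paper provides.
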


The following lemma follows from \cite[p.\ 49]{Burk860}.

\begin{lemma}\label{lem:zeta00approxbymart}
Let $X$ be a UMD Banach space and let $\zeta:X\times X \to \mathbb R$ be an optimal $\zeta$-function (i.e.\ $\zeta(0,0)$ is maximal). Then for any $\eps>0$ there exist an
$X$-valued Paley--Walsh martingale $f=(f_n)_{n\geq 1}$ which starts
in zero and a sequence of signs $\epsilon = (\epsilon_n)_{n\geq 1}$ such that
$\mathbb P (g^*>1) = 1$ and $\sup_{n\geq 1}\mathbb E\|f_n\| \leq
\tfrac{\zeta(0,0)}{2} + \eps$, where $g := \epsilon * f$.
\end{lemma}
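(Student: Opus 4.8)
The plan is to read the statement off Burkholder's variational description of the optimal $\zeta$-function, which is precisely what \cite[p.\ 49]{Burk860} supplies. Recall from there that $\zeta(0,0)$ is characterized by the dual extremal problem
\[
\zeta(0,0) \;=\; 2\,\inf \Big\{\, \sup_{n\geq 1}\E\|f_n\| \,\Big\},
\]
where the infimum is over all $X$-valued Paley--Walsh martingales $f=(f_n)_{n\geq 1}$ starting at $0$ for which there exists a sequence of signs $\epsilon=(\epsilon_n)_{n\geq 1}$ whose transform $g:=\epsilon*f$ satisfies $g^*\geq 1$ almost surely. (If Burkholder's formulation is stated for general $L^1$-bounded martingales instead of Paley--Walsh ones, one first approximates a near-optimal martingale by a Paley--Walsh martingale, up to an arbitrarily small loss in $\sup_n\E\|f_n\|$, in the spirit of the reduction underlying Proposition~\ref{prop:chUMD}; this does not affect the value of the infimum.)

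Granting this, fix $\eps>0$. By definition of the infimum there exist a Paley--Walsh martingale $f=(f_n)_{n\geq 1}$ starting at $0$ and a sequence of signs $\epsilon$ such that $g:=\epsilon*f$ obeys $g^*\geq 1$ a.s.\ and $\sup_{n\geq 1}\E\|f_n\| < \tfrac{\zeta(0,0)}{2}+\tfrac{\eps}{2}$; note that $\bigl(\|f_n\|\bigr)_{n\geq 1}$ is a submartingale, so $n\mapsto\E\|f_n\|$ is nondecreasing and the supremum is a finite limit.

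It remains to pass from the almost-sure inequality $g^*\geq 1$ to the strict one $\P(g^*>1)=1$, and for this we use positive homogeneity. For $\delta>0$ the pair $\bigl((1+\delta)f,\epsilon\bigr)$ is again a Paley--Walsh martingale starting at $0$ together with a sign sequence; its transform is $(1+\delta)g$, so
\[
\bigl((1+\delta)g\bigr)^* \;=\; (1+\delta)\,g^* \;\geq\; 1+\delta \;>\; 1 \quad\text{a.s.},
\]
while
\[
\sup_{n\geq 1}\E\bigl\|(1+\delta)f_n\bigr\| \;=\; (1+\delta)\sup_{n\geq 1}\E\|f_n\| \;<\; (1+\delta)\Bigl(\tfrac{\zeta(0,0)}{2}+\tfrac{\eps}{2}\Bigr).
\]
Taking $\delta=\delta(\eps,\zeta(0,0))>0$ small enough that the right-hand side is $\leq\tfrac{\zeta(0,0)}{2}+\eps$ yields the martingale $(1+\delta)f$ and the signs $\epsilon$ with all the required properties. (In general $f$ must be a genuinely infinite sequence, since one cannot truncate to finitely many steps while keeping $\P(g^*>1)=1$; this is allowed by the statement.)

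Apart from this bookkeeping, the one substantive input is the extremal identity for $\zeta(0,0)$ borrowed from \cite[p.\ 49]{Burk860}: the main point is to match Burkholder's normalization --- which is where the factor $2$ and the threshold $1$ in the escape condition originate --- to check whether he phrases the escape condition as $g^*\geq 1$, as $g^*>1$, or as the requirement that $\|g_n\|\geq 1$ for all large $n$ (the last also being harmless here, since $g^*\geq\limsup_n\|g_n\|$ and the rescaling above still applies), and, if necessary, to carry out the reduction from general martingales to Paley--Walsh martingales. Once his result is brought into the displayed form, the lemma follows exactly as above.
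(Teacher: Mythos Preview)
The paper does not actually prove this lemma: it merely states that it ``follows from \cite[p.\ 49]{Burk860}''. Your proposal is precisely an unpacking of that citation, and the approach is the right one: Burkholder's variational description of the optimal $\zeta$-function supplies a near-optimizer $f$ with $g^*\geq 1$ a.s., and your rescaling by $(1+\delta)$ to upgrade $g^*\geq 1$ to $g^*>1$ is a clean and correct piece of bookkeeping. The hedging you include about Burkholder's exact formulation (general vs.\ Paley--Walsh martingales, strict vs.\ non-strict threshold, the factor $2$) is honest and appropriate; these are genuinely things one must verify against the source, but none of them threatens the argument, and the reduction to Paley--Walsh martingales is indeed available along the lines you indicate.
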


\begin{remark}\label{rem:oderofCKX}
Let us compute  an upper bound for $C_{K,X}$ in Lemma \ref{lem:UMDPhi}.
Let $M\geq 1$ be the least integer such that $2^{-M} \leq \tfrac{\zeta(0,0)}{48 K}$. Fix $\beta := 2$ and $\delta := 2^{-M}$. Then by formula \cite[(1.8)]{Bu1} one has that for $f$ and $g$ from Lemma \ref{lem:UMDPhi}
\[
\mathbb P(g^* > 2\lambda, f^* \leq 2^{-M} \lambda) \leq \eps \mathbb P(g^* > \lambda),\;\;\; \lambda >0,
\]
where $\eps = 3 c \delta/(\beta-\delta-1) \leq 1/(2K)$, and where we used the fact that the constant $c$ from \cite[(1.2)]{Bu1} can be bounded from above by $4/\zeta(0,0)$ by \cite[Theorem 3.26 and Lemma 3.23]{Osbook} (see also \cite{Y19weakL1}).
Note that by \eqref{eq:lemUMDPhi}
\[
\Phi(\beta \lambda) \leq K \Phi(\lambda) + b,\;\;\; \Phi(\delta^{-1} \lambda) \leq K^M \Phi(\lambda) + b M K^M,\;\;\; \lambda>0,
\]
where one needs to iterate \eqref{eq:lemUMDPhi} $M$ times in order to get the latter inequality. Therefore by exploiting \cite[proof of Lemma 7.1]{Burk73} one has the following analogue of the formula \cite[(7.6)]{Burk73}
\[
\mathbb E \Phi(2^{-1} g^*) \leq \eps \mathbb E \Phi (g^*)  + K^M \mathbb E\Phi(f^*) + b M K^M,
\]
and by using the fact that $\mathbb E \Phi( g^*) \leq K \mathbb E \Phi(2^{-1}  g^*) + b$ and the fact that $\eps K \leq 1/2$ one has that
\[
\mathbb E \Phi(g^*) \leq 2K^{M+1} \mathbb E \Phi(f^*) + 2b(1+MK^{M+1}) \leq 2(MK^{M+1} + 1)(\mathbb E \Phi(f^*)+b),
\]
so $C_{K, X} \leq 2(MK^{M+1} + 1)$, where $M$ can be taken $[\log_2 \tfrac{48K}{\zeta(0,0)}] +1$. Of course this bound is not optimal.
\end{remark}

\section{Musielak-Orlicz spaces are UMD spaces}

The main result of this paper is the following.

\begin{theorem}\label{thm:MainXvalued}
Assume $X$ is a UMD space. Let $\Phi,\Psi:T\times[0,\infty)\to
[0,\infty)$ be complemented Young functions which both satisfy
$\Delta_2$. Then the Musielak-Orlicz space $L^{\Phi}(T;X)$ is a
UMD space.

Moreover, if $\Phi \in \Delta_2$ with constant $K_{\Phi}$ and
$h_{\Phi} \in L^1_{+}(T)$ and $\Psi \in \Delta_2$ with constant
$K_{\Psi}$ and $h_{\Psi} \in L^1_{+}(T)$, then for the optimal
$\zeta$-function $\zeta: L^{\Phi}(T;X) \times L^{\Phi}(T;X) \to
\mathbb R$ (see the discussion preceding Theorem~\ref{rem:zetafunction}) one has that
\begin{equation}\label{eq:lowerboundforzeta00}
\zeta(0,0) \geq \frac{1}{6K_{\Psi}C_{K_{\Phi}, X}C_h},
\end{equation}
and
\begin{equation}\label{eq:upperboundforbeta}
\beta_{p, L^{\Phi}(T;X)} \leq 432K_{\Psi}C_{K_{\Phi}, X}C_h\frac{(p+1)^2}{p-1},
\end{equation}
where $C_{K_{\Phi}, X}$ is as in Lemma \ref{lem:UMDPhi} and $C_h := 2 + \|h_{\Phi}\|_{L^1(T)} + \tfrac{1}{K_{\Psi}}\|h_{\Psi}\|_{L^1(T)}$.
\end{theorem}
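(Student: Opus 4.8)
The plan is to prove the $\zeta$-function bound \eqref{eq:lowerboundforzeta00} first; once that is established, $Y:=L^{\Phi}(T;X)$ is $\zeta$-convex and hence UMD, and \eqref{eq:upperboundforbeta} follows from the upper estimate in Theorem~\ref{rem:zetafunction} (with $432=72\cdot 6$). We may assume $K_{\Phi},K_{\Psi}\geq 2$, since enlarging a $\Delta_{2}$ constant only weakens the hypothesis. Let $\zeta$ be an optimal $\zeta$-function for $Y$. Given $\eps>0$, Lemma~\ref{lem:zeta00approxbymart} produces a $Y$-valued Paley--Walsh martingale $f=(f_{n})_{n\geq 1}$ starting at $0$ and a sequence of signs $\epsilon$ such that $g:=\epsilon*f$ satisfies $\P(g^{*}>1)=1$ and $\sup_{n}\E\|f_{n}\|_{Y}\leq\tfrac{\zeta(0,0)}{2}+\eps$; it therefore suffices to show $\sup_{n}\E\|f_{n}\|_{Y}\geq\tfrac{1}{12K_{\Psi}C_{K_{\Phi},X}C_{h}}$ and then let $\eps\downarrow 0$.

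Write $\tilde f_{n}(\omega,t):=(f_{n}(\omega))(t)\in X$, $\tilde g_{n}:=\epsilon*\tilde f$, and $\tilde f^{*}(\omega,t):=\sup_{k}\|\tilde f_{k}(\omega,t)\|_{X}$ (similarly $\tilde g^{*}$). For a.e.\ $t$ the sequence $(\tilde f_{n}(\cdot,t))_{n}$ is an $X$-valued Paley--Walsh martingale with transform $(\tilde g_{n}(\cdot,t))_{n}$, and $\Phi(t,\cdot)$ is a real Young function obeying \eqref{eq:lemUMDPhi} with $K=K_{\Phi}$ and $b=h_{\Phi}(t)$; Lemma~\ref{lem:UMDPhi} thus gives, with $C_{K_{\Phi},X}$ \emph{independent of $t$},
\[
\E\,\Phi\big(t,\tilde g^{*}(\cdot,t)\big)\leq C_{K_{\Phi},X}\big(\E\,\Phi\big(t,\tilde f^{*}(\cdot,t)\big)+h_{\Phi}(t)\big),
\]
and integrating in $t$ (Tonelli) yields $\E\int_{T}\Phi(t,\tilde g^{*}(\cdot,t))\,\ud\mu\leq C_{K_{\Phi},X}\big(\E\int_{T}\Phi(t,\tilde f^{*}(\cdot,t))\,\ud\mu+\|h_{\Phi}\|_{L^{1}}\big)$. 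Replacing $f$ by $cf$ for $c\geq 1$ preserves all hypotheses and leaves $C_{K_{\Phi},X}$ unchanged, so we may rescale. Since $\|c\tilde g^{*}(\omega,\cdot)\|_{L^{\Phi}}\geq\sup_{n}\|cg_{n}(\omega)\|_{Y}=c\,g^{*}(\omega)>c$ a.s., the definition of the Luxemburg norm together with convexity of $\Phi(t,\cdot)$ (and $\Phi(t,0)=0$, $c\geq 1$) forces $\int_{T}\Phi(t,c\tilde g^{*}(\omega,t))\,\ud\mu>c$ a.s., whence
\[
c\leq C_{K_{\Phi},X}\Big(\E\!\int_{T}\!\Phi(t,c\tilde f^{*}(\cdot,t))\,\ud\mu(t)+\|h_{\Phi}\|_{L^{1}(T)}\Big).
\]

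The crux is to dominate $\E\int_{T}\Phi(t,c\tilde f^{*}(\cdot,t))\,\ud\mu$ by $\sup_{n}\E\|f_{n}\|_{Y}$ up to the stated constants. Applying Lemma~\ref{lem:tobeused} with the roles of $\Phi$ and $\Psi$ exchanged (legitimate since $\Psi\in\Delta_{2}$ with constant $K_{\Psi}$ and $h_{\Psi}$) gives $\Phi(t,\lambda)\leq 2\lambda\varphi(t,\lambda)+\tfrac{1}{K_{\Psi}}h_{\Psi}(t)$ for a.e.\ $t$, hence $\int_{T}\Phi(t,c\tilde f^{*})\,\ud\mu\leq 2c\int_{T}\tilde f^{*}\varphi(t,c\tilde f^{*})\,\ud\mu+\tfrac{1}{K_{\Psi}}\|h_{\Psi}\|_{L^{1}}$. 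The term $c\,\E\int_{T}\tilde f^{*}\varphi(t,c\tilde f^{*})\,\ud\mu$ I would then estimate by combining Young's inequality \eqref{eq:Young}, a Doob-type maximal inequality for the submartingales $(\|\tilde f_{n}(\cdot,t)\|_{X})_{n}$ — whose Orlicz loss is controlled by the $\Delta_{2}$ constant $K_{\Psi}$ of the complemented function (equivalently by the $\nabla_{2}$ character of $\Phi$) — and the equivalent norm \eqref{eq:Orliczequiv}–\eqref{eq:OrliczLux}, the two normalisations $+1$ in \eqref{eq:Orliczequiv} accounting for the summand $2$ in $C_{h}$. This replaces the modular of the maximal function by a constant times $K_{\Psi}\,c\,\sup_{n}\E\|f_{n}\|_{Y}$ plus the additive contributions $\|h_{\Phi}\|_{L^{1}}$ and $\tfrac{1}{K_{\Psi}}\|h_{\Psi}\|_{L^{1}}$, that is, plus a multiple of $C_{h}$. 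Inserting this into the last displayed inequality and rearranging — comparing the coefficients of $c$, which may be chosen large while $\sup_{n}\E\|f_{n}\|_{Y}$ stays fixed — forces $\sup_{n}\E\|f_{n}\|_{Y}$ to exceed an absolute constant times $(K_{\Psi}C_{K_{\Phi},X}C_{h})^{-1}$, and a careful accounting of the numerical factors gives precisely $\tfrac{1}{12K_{\Psi}C_{K_{\Phi},X}C_{h}}$, hence \eqref{eq:lowerboundforzeta00}.

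The main obstacle is exactly this last estimate, namely passing from $\E\int_{T}\Phi(t,c\tilde f^{*}(\cdot,t))\,\ud\mu$ back to $\sup_{n}\E\|f_{n}\|_{Y}$. The naive route through an $L^{1}$-Doob inequality fails; one must therefore either exploit that the martingale produced by Lemma~\ref{lem:zeta00approxbymart} may be taken simple (so that every modular occurring is finite and the rescaling runs cleanly), or reorganise the estimate so that the maximal function is never invoked on the $f$-side via the $\Phi$–$\Psi$ duality $\varphi(t,\cdot)=\partial_{x}\Phi$, $\psi(t,\cdot)=\partial_{x}\Psi$. Moreover, the additive parts $h_{\Phi},h_{\Psi}$ of the $\Delta_{2}$ conditions must be carried through so that they enter only linearly, through $C_{h}$, rather than being amplified by the rescaling parameter $c$; and one must verify that the single factor $K_{\Psi}$ — and not a further power of $K_{\Phi}$ beyond what is already hidden in $C_{K_{\Phi},X}$ — suffices to absorb the Orlicz Doob-type loss. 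Settling this bookkeeping, and checking that $\varphi(t,\cdot)$ pairs correctly against the martingale, is where the technical weight of the proof lies.
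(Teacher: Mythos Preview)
Your argument has a genuine gap that the suggestions at the end do not repair. The scaling device---introducing $c\geq 1$, bounding $\int_{T}\Phi(t,c\tilde g^{*})\,\ud\mu$ from below by $c$, and then ``comparing coefficients of $c$''---needs the matching upper estimate
\[
\E\int_{T}\Phi\bigl(t,c\tilde f^{*}(\cdot,t)\bigr)\,\ud\mu \;\lesssim\; K_{\Psi}\,c\,\sup_{n}\E\|f_{n}\|_{Y} \;+\; (\text{additive terms}),
\]
\emph{linear} in $c$. This is simply false: already for $\Phi(t,x)=x^{p}$ with $p>1$ the left-hand side equals $c^{p}\,\E\|\tilde f^{*}\|_{L^{p}(T)}^{p}$. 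Proposition~\ref{prop:DoobPhi} only replaces $\tilde f^{*}$ by $K_{\Psi}\|f_{n}\|_{X}$ \emph{inside the modular}; it does not convert a modular into a multiple of the Luxemburg norm. The only way to make that conversion is to know $\int_{T}\Phi(t,\|f_{n}(t)\|_{X})\,\ud\mu\leq 1$ almost surely, i.e.\ $\|f_{n}\|_{L^{\infty}(\Omega;Y)}\leq 1$---precisely the information you do not have, since the martingale from Lemma~\ref{lem:zeta00approxbymart} is only $L^{1}$-bounded. (There is also a circularity: Lemma~\ref{lem:zeta00approxbymart} is stated for UMD spaces, so it cannot be invoked before $Y$ is known to be UMD.)

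The paper separates the two tasks. First it proves the $L^{\infty}\!\to\!L^{1}$ estimate
\[
\E\,g^{*}\;\leq\;K_{\Psi}C_{K_{\Phi},X}C_{h}\,\sup_{n}\|f_{n}\|_{L^{\infty}(\Omega;Y)}
\]
for every $Y$-valued Paley--Walsh martingale: normalise so that $\sup_{n}\|f_{n}\|_{L^{\infty}(\Omega;Y)}=1$, apply Lemma~\ref{lem:UMDPhi} and Proposition~\ref{prop:DoobPhi} pointwise in $t$, and then evaluate \eqref{eq:Orliczequiv} at the single value $\lambda=1/K_{\Psi}$---legitimate precisely because the $L^{\infty}$ normalisation forces $\int_{T}\Phi(t,\|f_{n}(t)\|_{X})\,\ud\mu\leq 1$ a.s. This already yields UMD via Proposition~\ref{prop:chUMD}. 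For \eqref{eq:lowerboundforzeta00} the paper then invokes Lemma~\ref{lem:zeta00approxbymart} (now legitimate) together with an additional reduction, \cite[Lemma~3.1]{Burk860}, which converts the $L^{1}$-bounded martingale $f$ into a Paley--Walsh martingale $F$ with $\sup_{n}\|F_{n}\|_{L^{\infty}(\Omega;Y)}\leq 6\sup_{n}\E\|f_{n}\|_{Y}$ and $\E(\epsilon*F)^{*}\geq\tfrac{1}{2}$. This $L^{1}\to L^{\infty}$ conversion is the missing ingredient in your scheme; it is what produces the factor $6$ in \eqref{eq:lowerboundforzeta00}, and it is not obtainable from Young's inequality, Doob, or the $\Delta_{2}$ bookkeeping you outline.
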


This result is well-known in the case of $\Phi(x) = |x|^p$, and
then it is a simple consequence of Fubini's theorem which allows
to write $L^p(\Omega;L^p(T;X)) = L^p(T;L^p(\Omega;X))$ and to
apply the UMD property of $X$ pointwise a.e.\ in $T$ (see
\cite[Proposition 4.2.15]{HNVW1}). Such a Fubini argument is
necessarily limited to $L^p$-spaces. Indeed, the
Kolmogorov–Nagumo theorem says that for Banach function spaces
$E$ and $F$ one has $E(F) = F(E)$  isomorphically, if and only if
$E$ and $F$ are weighted $L^p$-spaces (see \cite[Theorem
3.1]{BBS}).

To prove Theorem \ref{thm:MainXvalued} we will use several results from the preliminaries. Moreover, we will need the following scalar-valued result which is a well-known version of Doob's maximal inequality for a certain class of Young functions.
\begin{proposition}\label{prop:DoobPhi}
Suppose that $\Phi:[0,\infty)\to [0,\infty]$ is a Young function with a
right-continuous derivative $\varphi:[0,\infty)\to [0,\infty)$ and
that there exists a $q\in (1, \infty)$ and $c\in [0,\infty)$ such
that
\[
\Phi(\lambda) \leq \frac{1}{q} \lambda \varphi(\lambda)+c,\;\;\;
\lambda \geq 0.
\]
Then for all nonnegative submartingales $(f_n)_{n\geq 0}$
\[
\E\Phi(f_n^*)\leq \E \Phi(q' f_n) + c,\;\;\; n\geq 0.
\]
In particular, $\|f_n^*\|_{\Phi} \leq  q' (1+c) \|f_n\|_{\Phi}$.
\end{proposition}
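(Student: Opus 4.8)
## Proof Plan for Proposition \ref{prop:DoobPhi}

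The plan is to follow the classical proof of the $L^p$ Doob maximal inequality via the weak-type $(1,1)$ bound for submartingales, and then integrate against the measure $d\Phi$ using Fubini's theorem, exploiting the hypothesis $\Phi(\lambda)\leq \frac{1}{q}\lambda\varphi(\lambda)+c$ to absorb the maximal term. First I would recall the standard Doob weak-type inequality: for a nonnegative submartingale $(f_n)_{n\geq 0}$ one has $\lambda\,\P(f_n^*>\lambda)\leq \E\big[f_n\one_{\{f_n^*>\lambda\}}\big]$ for all $\lambda>0$ and all $n\geq 0$. This is the only probabilistic input, and it is stated in any standard reference (e.g.\ \cite{HNVW1}).

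Next I would write $\E\Phi(f_n^*) = \E\int_0^{f_n^*}\varphi(\lambda)\,d\lambda = \int_0^\infty \varphi(\lambda)\,\P(f_n^*>\lambda)\,d\lambda$ by Tonelli. Using the weak-type bound and Tonelli again,
\begin{align*}
\E\Phi(f_n^*) &\leq \int_0^\infty \frac{\varphi(\lambda)}{\lambda}\,\E\big[f_n\one_{\{f_n^*>\lambda\}}\big]\,d\lambda
= \E\Big[f_n\int_0^{f_n^*}\frac{\varphi(\lambda)}{\lambda}\,d\lambda\Big].
\end{align*}
Here there is a small subtlety at $\lambda=0$: since $\varphi$ is increasing and $\varphi(0)\geq 0$, the integrand $\varphi(\lambda)/\lambda$ could behave badly near $0$ only if $\varphi(0)>0$; but in that case the weak-type estimate can be applied more carefully, or one notes that the hypothesis $\Phi(\lambda)\leq \frac{1}{q}\lambda\varphi(\lambda)+c$ forces control that makes the relevant quantities finite (if $\E\Phi(q'f_n)=\infty$ there is nothing to prove, so we may assume finiteness). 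Now the hypothesis gives $\frac{\varphi(\lambda)}{\lambda}\geq \frac{q}{\lambda^2}\big(\Phi(\lambda)-c\big)$, but what I actually want is an \emph{upper} bound on $\int_0^{s}\frac{\varphi(\lambda)}{\lambda}\,d\lambda$ in terms of $\Phi(s)/s$. Integrating by parts, $\int_0^s \frac{\varphi(\lambda)}{\lambda}\,d\lambda = \frac{\Phi(s)}{s} + \int_0^s\frac{\Phi(\lambda)}{\lambda^2}\,d\lambda$ (using $\Phi(\lambda)=\int_0^\lambda\varphi$), and then by the hypothesis $\Phi(\lambda)\leq \frac1q\lambda\varphi(\lambda)+c$ one gets $\int_0^s\frac{\Phi(\lambda)}{\lambda^2}\,d\lambda \leq \frac1q\int_0^s\frac{\varphi(\lambda)}{\lambda}\,d\lambda + \int_0^s \frac{c}{\lambda^2}\,d\lambda$. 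The last integral diverges, which signals that the integration-by-parts/boundary bookkeeping at $\lambda=0$ must be handled differently — the cleaner route is to avoid the pointwise bound on $\int_0^s\frac{\varphi(\lambda)}{\lambda}d\lambda$ and instead integrate the inequality $\varphi(\lambda)\leq q\,\frac{\Phi(\lambda)-c}{\lambda}$ directly into the displayed chain: from $\E\Phi(f_n^*)\leq \E\big[f_n\int_0^{f_n^*}\frac{\varphi(\lambda)}{\lambda}d\lambda\big]$ I instead bound, via Tonelli reversed, $\int_0^\infty\frac{\varphi(\lambda)}{\lambda}\P(f_n^*>\lambda)\,d\lambda$ and apply Hölder-type manipulation.

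Here is the step that will carry the argument, and the one I expect to be the main obstacle to get exactly right: rather than the boundary-term route, use the substitution that turns the maximal-function integral into $\E\Phi(f_n^*)$ itself. Concretely, from $\E\Phi(f_n^*)\leq \int_0^\infty \frac{\varphi(\lambda)}{\lambda}\E[f_n\one_{\{f_n^*>\lambda\}}]\,d\lambda$, apply Young's inequality \eqref{eq:Young} in the form $\frac{\varphi(\lambda)}{\lambda}f_n = \frac{\varphi(\lambda)}{\lambda}\cdot f_n$ — no: the correct and standard device is to write the right-hand side as $\E\big[f_n \cdot \psi\text{-type primitive of }f_n^*\big]$ and recognize that $\int_0^{f_n^*}\frac{\varphi(\lambda)}{\lambda}d\lambda$ is comparable to $q'\,\varphi(f_n^*)$ up to the constant $c$. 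Indeed, from $\Phi(\lambda)\le\frac1q\lambda\varphi(\lambda)+c$, since $y\mapsto\Phi(y)/y$ is increasing and $\varphi$ is increasing, one derives $\int_0^s\frac{\varphi(\lambda)}{\lambda}d\lambda\le q'\varphi(s)+(\text{const})\cdot c/s$-type control; multiplying by $f_n$ and using that on $\{f_n^*>\lambda\}$ one can relate $f_n$ to $f_n^*$, together with Young's inequality $f_n\cdot q'\varphi(f_n^*)\le \Phi(q'f_n)+\Psi(\varphi(f_n^*))$ and the identity/estimate $\Psi(\varphi(s))\le\frac{1}{q}s\varphi(s)\le\Phi(s)+c$ (which follows from the equality case of \eqref{eq:Young} and the hypothesis, or from Lemma \ref{lem:tobeused}-type reasoning), one absorbs $\frac1q\E\Phi(f_n^*)$-worth of the maximal term into the left-hand side. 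Rearranging yields $\E\Phi(f_n^*)\le\E\Phi(q'f_n)+c$. The final statement $\|f_n^*\|_\Phi\le q'(1+c)\|f_n\|_\Phi$ then follows from the definition \eqref{eq:Orliczequiv}–type homogenization: apply the established inequality to $f_n/\|f_n\|_\Phi$ (scalar case of $\Phi$), use $\E\Phi(f_n/\|f_n\|_\Phi)\le1$, and track the constant $q'(1+c)$ through the infimum defining the Luxemburg norm. The delicate point throughout — and the main obstacle — is the careful treatment of the constant $c$ and the behavior near $\lambda=0$ when $\varphi(0)>0$; I would resolve it by first assuming $\E\Phi(q'f_n)<\infty$ (else trivial) and by using the monotone behavior of $\Phi(\lambda)/\lambda$ to dominate all boundary contributions, invoking the equality case of Young's inequality \eqref{eq:Young} precisely as in the proof of Lemma \ref{lem:tobeused}.
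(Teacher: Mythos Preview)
Your plan has the right opening (weak-type Doob plus the layer-cake identity $\E\Phi(f_n^*)=\int_0^\infty\varphi(\lambda)\P(f_n^*>\lambda)\,d\lambda$), but the ``absorption'' step you propose does not work. The inequality you lean on,
\[
\Psi(\varphi(s))\le \tfrac{1}{q}\,s\varphi(s)\le \Phi(s)+c,
\]
is false in general. From the equality case of Young one has $\Psi(\varphi(s))=s\varphi(s)-\Phi(s)$, so your first inequality amounts to $\tfrac{1}{q'}s\varphi(s)\le\Phi(s)$; for $\Phi(s)=s^{p}$ (so $q=p$) this reads $(p-1)s^{p}\le s^{p}$, which fails as soon as $p>2$. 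In fact the hypothesis $\Phi(s)\le\tfrac1q s\varphi(s)+c$ yields the \emph{lower} bound $\Psi(\varphi(s))\ge\tfrac1{q'}s\varphi(s)-c$, the opposite of what you use. Hence the Young-inequality route $q'f_n\varphi(f_n^*)\le\Phi(q'f_n)+\Psi(\varphi(f_n^*))$ does not produce an absorbable term. A second, separate gap is that after Fubini you arrive at $\E\big[f_n\int_0^{f_n^*}\frac{\varphi(\lambda)}{\lambda}\,d\lambda\big]$, not $\E[f_n\varphi(f_n^*)]$, and you never justify the passage between them; when $c>0$ (and $\varphi(0)>0$ is not excluded by the hypothesis) your integration-by-parts attempt indeed diverges, as you note, and none of the alternatives you sketch actually closes the gap.

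For comparison, the paper does not reprove the main estimate: it cites \cite[estimate (104.5)]{DM} for the case $c=0$ and says the case $c>0$ follows by the same argument with an obvious modification. For the norm inequality the paper does \emph{not} rescale by the Luxemburg norm as you suggest (this is awkward because $\E\Phi(f_n^*)\le\E\Phi(q'f_n)+c$ is not homogeneous); instead it uses the equivalent norm \eqref{eq:Orliczequiv}: for every $\lambda>0$,
\[
\|f_n^*\|_{\Phi}\le \lambda^{-1}\big(1+\E\Phi(\lambda f_n^*)\big)\le \lambda^{-1}\big(1+c+\E\Phi(\lambda q'f_n)\big)\le (1+c)\,\lambda^{-1}\big(1+\E\Phi(\lambda q'f_n)\big),
\]
and taking the infimum over $\lambda$ gives $\|f_n^*\|_{\Phi}\le (1+c)\|q'f_n\|_{\Phi}=q'(1+c)\|f_n\|_{\Phi}$. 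If you want to supply your own proof of the first part rather than cite \cite{DM}, you will need a different device for the absorption; the bound $\Theta(s):=\int_0^s\frac{\varphi(\lambda)}{\lambda}\,d\lambda\le q'\,\Phi(s)/s$ (valid when $c=0$, via the integration by parts you wrote combined with the hypothesis) is the correct intermediate inequality, but it must then be paired with an estimate of $f_n\,\Phi(f_n^*)/f_n^*$ that does \emph{not} go through $\Psi(\varphi(\cdot))$.
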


\begin{proof}
The result for $c=0$ is proved in \cite[estimate (104.5)]{DM}, and the case $c>0$ follows by a simple modification of that argument.
The final assertion follows from the obtained estimate since for any $\lambda>0$ we have
\begin{align*}
\|f_n^*\|_{\Phi} & \leq \lambda^{-1}(1+\E\Phi(\lambda f_n^*))
\\ & \leq \lambda^{-1}(c+1+\E \Phi(\lambda q' f_n))
\\ & \leq (c+1) \lambda^{-1}(1+\E \Phi(\lambda q' f_n)).
\end{align*}
Taking the infimum over all $\lambda>0$ yields the required conclusion.
\end{proof}

\begin{proof}[Proof of Theorem \ref{thm:MainXvalued}]
Let $Y:=L^{\Phi}(T;X)$.
In order to prove the theorem we will use Proposition \ref{prop:chUMD}. Let $f=(f_{n})_{n \geq 0}$ be a Paley--Walsh martingale with values in $Y$. Let $\epsilon = (\epsilon_n)_{n\geq 0}$ be a sequence of signs and $g := \epsilon*f$. We will show that $g^*<\infty$ a.s. For this it is enough to show that
\begin{equation}\label{eq:toprove}
\E \sup_{n\geq 0}\|g_n\|_Y \leq K_{\Psi}C_{K_{\Phi},X}C_{h} \sup_{n\geq
0}\|f_n\|_{L^\infty(\Omega;Y)},
\end{equation}
where $C_{K_{\Phi}, X}$ is as in Lemma \ref{lem:UMDPhi} and $C_h = 2 + \|h_{\Phi}\|_{L^1(T)} + \tfrac{1}{K_{\Psi}}\|h_{\Psi}\|_{L^1(T)}$.
By homogeneity we can assume $\sup_{n\geq
0}\|f_n\|_{L^\infty(\Omega;Y)}=1$.

We know that $K_{\Phi}>1$ and a function $h_{\Phi}\in L^1_+(T)$
satisfy the following inequality
\begin{equation}\label{eq:PhiDelta}
\Phi(t,2\lambda) \leq K_{\Phi}\Phi(t,\lambda)+h_{\Phi}(t), \ \ \
\lambda\in [0,\infty), \ t\in T.
\end{equation}
Since $\Psi$ satisfies $\Delta_2$ with constant $K_{\Psi}>1$ and
$h_{\Psi} \in L^1_{+}(T)$ it follows from Lemma \ref{lem:tobeused}
that
\begin{equation}\label{eq:Phinabla}
\Phi(t,\lambda)\leq \frac{K_{\Psi}-1}{K_{\Psi}} \lambda
\varphi(t,\lambda)+ \frac{1}{K_{\Psi}}h_{\Psi}(t), \ \ \lambda\in
[0,\infty), \ \ t\in T.
\end{equation}

One can check that for a.e.\ $t\in T$, $f(t)$ is an $X$-valued
martingale and  $g_n(t) = (\epsilon*(f(t)))_n$ (use that $f$ is a Paley--Walsh martingale). Therefore, first applying \eqref{eq:PhiDelta} and
Lemma \ref{lem:UMDPhi} and then \eqref{eq:Phinabla} and
Proposition \ref{prop:DoobPhi} to the submartingale
$(\|f_k(t)\|_X)_{k\geq 0}$ gives that for almost all $t\in T$,
\begin{align*}
\E \Phi(t,\sup_{k\leq n}\|g_k(t)\|_X)  & \leq C_{K_{\Phi},X} \left[ \E
\Phi(t,\sup_{k\leq n}\|f_k(t)\|_X) + h_{\Phi}(t) \right]
\\  & \leq C_{K_{\Phi},X} \left[ \E \Phi(t,K_{\Psi}\|f_n(t)\|_X) + h_{\Phi}(t)+\tfrac{1}{K_{\Psi}}h_{\Psi}(t) \right].
\end{align*}
The same holds with $(f,g)$ replaced by $(\lambda f, \lambda g)$ for any $\lambda>0$. Integrating over $t\in T$ (and using \eqref{eq:Orliczequiv}) we find that
\begin{align*}
\E & \sup_{k\leq n} \|g_k\|_{X,\Phi} \leq \E \sup_{k\leq n}
\frac{1}{\lambda}\Big(1+\int_T \Phi(t,\lambda \|g_k(t)\|_X)
d\mu(t) \Big)
\\ & \stackrel{(*)}\leq \E  \frac{1}{\lambda}\Big(1+\int_T \Phi(t,\sup_{k\leq n} \lambda \|g_k(t)\|_X) d\mu(t) \Big)
\\ & \leq C_{K_{\Phi},X} \E  \frac{1}{\lambda}\Big(1+\int_T \Phi(t,K_{\Psi}\lambda \|f_n(t)\|_X) d\mu(t) +\|h_{\Phi}\|_{L^1(T)}+\|\tfrac{1}{K_{\Psi}}h_{\Psi}\|_{L^1(T)}  \Big),
\end{align*}
where $(*)$ follows form the fact that $\sup \int \leq \int \sup$
and the fact that the map $\lambda \mapsto \Phi(t,\lambda)$ is
increasing in $\lambda \geq 0$.
Since $\int_T \Phi(t,\|f_n(t)\|_X) d\mu(t)\leq 1$ a.s.\ by
\eqref{eq:defofMOnorm} and the assumption
$\|f_{n}\|_{L^{\infty}(\Omega;Y)}\leq 1$, it follows by setting $\lambda =
1/K_{\Psi}$ that
\begin{equation*}\label{eq:g*isbddbyinftymomentoff*}
\E \sup_{k\leq n} \|g_k\|_{X,\Phi} \leq K_{\Psi}C_{K_{\Phi},X}C_h.
\end{equation*}
Now the required estimate \eqref{eq:toprove} follows from
\eqref{eq:OrliczLux}.

For proving \eqref{eq:lowerboundforzeta00} and
\eqref{eq:upperboundforbeta} we will use Lemma~\ref{lem:zeta00approxbymart}.
By the first part of the proof, $Y=L^{\Phi}(T;X)$ is UMD.
Fix $\eps>0$. Then by Lemma~\ref{lem:zeta00approxbymart}
there exist a $Y$-valued Paley--Walsh martingale $f=(f_n)_{n\geq 0}$
which starts in zero and a sequence of signs $\epsilon = (\epsilon_n)_{n\geq
0}$ such that $\mathbb P (g^*>1) = 1$ and $\sup_{n\geq 0}\mathbb
E\|f_n\|_{Y} \leq \tfrac{\zeta(0,0)}{2} + \eps$, where $g := \epsilon * f$. By \cite[Lemma
3.1]{Burk860} there exist discrete $Y$-valued Paley--Walsh martingales $F=(F_n)_{n\geq 0}$ and $G=(G_n)_{n\geq 0}$ such that $G = \epsilon*F$, $\mathbb P(G^* >1) \leq
1/2$, and
$$
\sup_{n\geq 0}\|F_n\|_{L^{\infty}(\Omega;Y)} \leq 6\sup_{n\geq
0}\mathbb E\|f_n\|_{Y}.
$$
Therefore, by \eqref{eq:toprove},
\begin{multline*}
\frac 12 \leq \mathbb E G^* \leq K_{\Psi}C_{K_{\Phi},X}C_h \sup_{n\geq
0}\|F_n\|_{L^{\infty}(\Omega;Y)}\\
 \leq 6K_{\Psi}C_{K_{\Phi},X}C_h\sup_{n\geq
0}\mathbb E\|f_n\|_{Y} \leq 3K_{\Psi}C_{K_{\Phi},X}C_h (\zeta(0,0) +2\eps),
\end{multline*}
so letting $\eps\to 0$ gives \eqref{eq:lowerboundforzeta00}.
\eqref{eq:upperboundforbeta} follows from
\eqref{eq:lowerboundforzeta00} and \eqref{eq:betavszeta}.
\end{proof}

We recover the following result of \cite{FG91} and \cite{Liu}.
Recall that a measure space $(T,\Sigma,\mu)$ is divisible if for
every $A\in \Sigma$ and $t\in (0,1)$ there exist sets $B,C\in
\Sigma$ such that $B,C\subseteq A$, $\mu(B) = t\mu(A)$ and $\mu(C) = (1-t)\mu(A)$.
The divisibility condition is only needed in the implication
(ii)$\Rightarrow$(iii).

\begin{corollary}\label{cor:Orlicz}
Let $X \neq \{0\}$ be a Banach space and assume that $T$ is divisible and
$\sigma$-finite. Suppose $\Phi,\Psi:T \times [0,\infty)\to [0,\infty]$ are
complementary Young functions. Then the following are equivalent:
\begin{enumerate}[(i)]
\item $L^{\Phi}(T;X)$ is a UMD space; \item $L^{\Phi}(T)$ is
reflexive and $X$ is a UMD space; \item $\Phi$ and $\Psi$ both
satisfy $\Delta_2$ and $X$ is a UMD space.
\end{enumerate}
\end{corollary}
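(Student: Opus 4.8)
The plan is to prove the chain of implications $(iii)\Rightarrow(i)\Rightarrow(ii)\Rightarrow(iii)$, combining Theorem~\ref{thm:MainXvalued} with known characterizations of reflexivity for Musielak--Orlicz spaces and the elementary fact that both $X$ and the scalar Musielak--Orlicz space $L^\Phi(T)$ embed as complemented subspaces of $L^\Phi(T;X)$, so that the UMD property passes down to each of them.

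For $(iii)\Rightarrow(i)$: this is immediate from Theorem~\ref{thm:MainXvalued}, which says precisely that if $X$ is UMD and $\Phi,\Psi$ both satisfy $\Delta_2$ then $L^\Phi(T;X)$ is UMD. (One should note that $\Delta_2$ for both $\Phi$ and $\Psi$ forces $\Phi,\Psi$ to be finite-valued a.e., so the hypothesis matches the $[0,\infty)$-valued setting of that theorem.)

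For $(i)\Rightarrow(ii)$: assume $L^\Phi(T;X)$ is UMD, hence reflexive (by \cite[Theorem 4.3.8]{HNVW1}). Since $X\neq\{0\}$, pick $x_0\in X$ with $\|x_0\|=1$; the map $g\mapsto g\otimes x_0$ identifies $L^\Phi(T)$ isometrically with a closed subspace of $L^\Phi(T;X)$, and likewise, fixing a set $A\in\Sigma$ of finite positive measure (available by $\sigma$-finiteness), $x\mapsto \one_A\otimes x$ identifies $X$ with a closed subspace. Closed subspaces of UMD spaces are UMD and closed subspaces of reflexive spaces are reflexive, so $X$ is UMD and $L^\Phi(T)$ is reflexive. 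The only subtlety is normalizing $\one_A$ to have norm one in $L^\Phi(T)$, which is harmless.

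For $(ii)\Rightarrow(iii)$: this is where divisibility enters and is the main obstacle. Here one invokes the theory of reflexivity for Musielak--Orlicz spaces: by \cite[Section 11]{Musielak} (or the duality/uniform convexity results recalled in the preliminaries), for a divisible $\sigma$-finite measure space, $L^\Phi(T)$ is reflexive if and only if both $\Phi$ and its complementary function $\Psi$ satisfy $\Delta_2$. Divisibility is needed precisely to rule out the degenerate situation where $L^\Phi(T)$ could fail $\Delta_2$ yet still be reflexive because the measure space is too coarse to "see" the bad points of $\Phi$ — on a divisible space one can split sets to extract the offending behaviour and produce, from a failure of $\Delta_2$, either a copy of $\ell^1$ or $c_0$ (or $\ell^\infty$), contradicting reflexivity. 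I would cite the relevant statement from \cite{Musielak} (and perhaps \cite{DHHR,Koz} for the variable exponent case) rather than reprove it. Combined with the hypothesis that $X$ is UMD, this yields (iii), closing the cycle.

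One should also remark, as the authors do before the statement, that divisibility is used only in $(ii)\Rightarrow(iii)$; the implications $(iii)\Rightarrow(i)$ and $(i)\Rightarrow(ii)$ hold for arbitrary $\sigma$-finite $(T,\Sigma,\mu)$.
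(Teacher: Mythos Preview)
Your proposal is correct and follows essentially the same cycle of implications as the paper: $(iii)\Rightarrow(i)$ via Theorem~\ref{thm:MainXvalued}, $(i)\Rightarrow(ii)$ by embedding $L^\Phi(T)$ and $X$ as closed subspaces, and $(ii)\Rightarrow(iii)$ via the known characterization of reflexivity. The only difference is in the last step: you black-box the equivalence ``$L^\Phi(T)$ reflexive $\Leftrightarrow$ $\Phi,\Psi\in\Delta_2$'' with a reference to Musielak, whereas the paper unpacks it slightly by invoking the dual decomposition $L^{\Phi}(T)^* = L^{\Psi}(T) \oplus \Lambda$ from Lemma~\ref{lem:dualofLPhi}, arguing that reflexivity forces $\Lambda=0$, and then citing \cite[Corollary~1.7.4]{Koz} for the fact that (under divisibility) $\Lambda=0$ implies $\Phi\in\Delta_2$; this is a more precise reference than \cite[Section~11]{Musielak}, which treats uniform convexity and hence only the converse direction.
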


For the proof we will need the following lemma which follows from
\cite[Theorem 2.2]{Koz} and \cite[Theorem 4.7]{Koz77}.

\begin{lemma}\label{lem:dualofLPhi}
Let $\Phi, \Psi:T \times [0,\infty)\to [0,\infty]$ be
complementary Young functions. Then there exists a decomposition
$L^{\Phi}(T)^* = L^{\Psi}(T) \oplus \Lambda$ of the dual of
$L^{\Phi}(T)$ into a direct sum of two Banach spaces, where $g\in
L^{\Psi}(T)$ acts on $L^{\Phi}(T)$ in the following way:
\[
\langle f, g\rangle = \int_T fg \ud \mu,\;\;\; f\in L^{\Phi}(T).
\]
\end{lemma}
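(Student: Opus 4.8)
The plan is to realize the claimed splitting as the canonical decomposition of the dual of a Banach function space into its order-continuous part and its singular part, and then to identify the order-continuous part with $L^{\Psi}(T)$ through the integral pairing; this is precisely how the two cited results of Kozek combine.

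First I would set up the integral duality. Given $g \in L^{\Psi}(T)$, Young's inequality \eqref{eq:Young} applied pointwise in $t$ and integrated over $T$ yields, after normalizing by the Luxemburg norms, the H\"older-type bound $\bigl|\int_T fg \ud\mu\bigr| \leq 2\,\|f\|_{L^{\Phi}(T)}\,\|g\|_{L^{\Psi}(T)}$ for all $f \in L^{\Phi}(T)$. Hence $g \mapsto \langle\,\cdot\,, g\rangle$ defines a bounded linear map $J\colon L^{\Psi}(T) \to L^{\Phi}(T)^{*}$. Using the equality case of \eqref{eq:Young} (testing against functions for which $y = \varphi(t,x)$ realizes equality) one checks the reverse inequality, so that $J$ is an isomorphism onto its range with $\|Jg\|_{L^{\Phi}(T)^{*}}$ comparable to $\|g\|_{L^{\Psi}(T)}$. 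The identification of the range of $J$ with the order-continuous (equivalently, absolutely continuous) functionals on $L^{\Phi}(T)$ is the content of \cite[Theorem 2.2]{Koz}.

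Next I would invoke the general band structure of the order dual. Since $L^{\Phi}(T)$ is a Banach lattice over a $\sigma$-finite measure space, its dual $L^{\Phi}(T)^{*}$ is a Dedekind complete Banach lattice in which the order-continuous functionals, say $L^{\Phi}(T)^{*}_{c}$, form a projection band. Writing $\Lambda$ for the complementary band of singular functionals and $P$ for the associated band projection (which is contractive because the lattice norm is a Riesz norm), one obtains a topological direct sum $L^{\Phi}(T)^{*} = L^{\Phi}(T)^{*}_{c} \oplus \Lambda$ of Banach spaces; this is \cite[Theorem 4.7]{Koz77}. Combining with the first step, i.e.\ identifying $L^{\Phi}(T)^{*}_{c}$ with $L^{\Psi}(T)$ via $J$, gives the asserted decomposition $L^{\Phi}(T)^{*} = L^{\Psi}(T) \oplus \Lambda$, with each $g \in L^{\Psi}(T)$ acting by $f \mapsto \int_T fg \ud\mu$.

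The main obstacle is the sharp half of the first step: showing that every order-continuous functional is represented by a single $g \in L^{\Psi}(T)$ and that its functional norm is comparable to $\|g\|_{L^{\Psi}(T)}$. The boundedness direction is immediate from Young's inequality, but the representation requires a Radon--Nikodym-type argument --- each order-continuous functional restricts to a $\mu$-absolutely continuous signed measure on sets of finite measure, whose density is the candidate $g$ --- together with the lower norm bound obtained by testing against functions attaining equality in \eqref{eq:Young}. Both ingredients are exactly what the cited theorems of Kozek supply, so within the paper the argument reduces to assembling \cite[Theorem 2.2]{Koz} and \cite[Theorem 4.7]{Koz77}.
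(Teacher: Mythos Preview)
Your proposal is correct and matches the paper's approach exactly: the paper simply states that the lemma follows from \cite[Theorem 2.2]{Koz} and \cite[Theorem 4.7]{Koz77}, and what you have written is precisely an unpacking of how those two results combine (the band decomposition of the dual into order-continuous and singular parts, together with the identification of the order-continuous part with $L^{\Psi}(T)$ via the integral pairing).
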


\begin{proof}[Proof of Corollary \ref{cor:Orlicz}]
{\em (i) }$\Rightarrow${\em (ii) }: Fix $h\in L^{\Phi}(T)$ and $x \in X$ of norm
one. Then $L^{\Phi}(T)$ and $X$ can be identified with the closed subspaces $L^{\Phi}(T) \otimes x$ and $h \otimes X$ of the UMD space $L^{\Phi}(T;X)$, respectively, and therefore have UMD themselves.
In particular, $L^{\Phi}(T)$ is reflexive.

{\em (ii) }$\Rightarrow${\em (iii) }: We show that $\Phi$
satisfies $\Delta_2$. The proof for $\Psi$ is similar. By Lemma~\ref{lem:dualofLPhi}, $L^{\Phi}(T)^* = L^{\Psi}(T) \oplus
\Lambda$, so
$$
L^{\Phi}(T)^{**} = L^{\Psi}(T)^{*} \oplus \Lambda^{*} \supseteq
L^{\Phi}(T) \oplus \Lambda^{*},
$$
where the latter inclusion follows from Lemma \ref{lem:dualofLPhi}
and means that $L^{\Phi}(T) \oplus \Lambda^{*}$ is a closed
subspace of $L^{\Psi}(T)^{*} \oplus \Lambda^{*}$, and hence of
$L^{\Phi}(T)^{**}$. Thus $\Lambda = 0$ due to the reflexivity of
$L^{\Phi}(T)$, and since $T$ is divisible the desired statement follows from \cite[Corollary
1.7.4]{Koz}.

{\em (iii) }$\Rightarrow${\em (i) }:  This follows from Theorem
\ref{thm:MainXvalued}.
\end{proof}

As a consequence of the above results many other spaces are UMD as
well. Indeed, it suffices to be isomorphic to a closed subspace
(or quotient space) of an $L^{\Phi}(T;X)$ space with UMD. This
applies to the Musielak--Orlicz variants of Sobolev, Besov, and
Triebel--Lizorkin spaces.

\begin{remark}\label{rem:EX}
A result of Rubio de Francia (see \cite[p.\ 214]{Rub}) states that
for a Banach function space $E$ and a Banach space $X$ one has
that $E(X)$ is a UMD space if and only if $E$ and $X$ are both UMD
spaces. Therefore, it actually suffices to consider $X = \R$ in
the proof of Theorem \ref{thm:MainXvalued}. Since our argument
works in the vector-valued case without difficulty, we consider
that setting from the start.
\end{remark}

 For the variable Lebesgue spaces we obtain the following
consequence. For a measurable mapping $p:T\to [1, \infty]$ we will write $p_+
= \|p\|_{L^{\infty}(T)}$ and $p_- = \|1/p\|_{L^\infty(T)}^{-1}$.

\begin{corollary}\label{cor:Lp}
Let $X\neq \{0\}$ be a Banach space and assume $T$ is divisble and $\sigma$-finite. Assume $p:T\to [1,\infty]$ is measurable. Then the following assertions are equivalent.
\begin{enumerate}[(i)]
\item $L^{p(\cdot)}(T;X)$ is a UMD space;
\item $L^{p(\cdot)}(T)$ is reflexive and $X$ is a UMD space;
\item $p_->1$ and $p_+<\infty$ and $X$ is a UMD space.
\end{enumerate}
\end{corollary}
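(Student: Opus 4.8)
The plan is to reduce Corollary~\ref{cor:Lp} to Corollary~\ref{cor:Orlicz} via the generalized Young function $\Phi(t,\lambda)=\lambda^{p(t)}$, using Example~\ref{ex:Young_function_variable_Lebesgue} (and its obvious vector-valued analogue) to identify the resulting Musielak--Orlicz space with the variable Lebesgue space, and Example~\ref{ex:Young_function_variable_Lebesgue;Delta2&complemented_function} to translate the two $\Delta_2$ conditions into conditions on the exponent $p$.

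First I would dispose of the case $\mu(\{p=\infty\})>0$. Then $p_+=\|p\|_{L^\infty(T)}=\infty$, so (iii) fails. Since $T$ is divisible and $\sigma$-finite, $\{p=\infty\}$ contains countably many pairwise disjoint sets of positive finite measure, so $L^{p(\cdot)}(T)$ contains an isomorphic copy of $\ell^\infty$ (consisting of functions supported in $\{p=\infty\}$) and is therefore non-reflexive; the same holds for $L^{p(\cdot)}(T;X)$ since $X\neq\{0\}$. Hence (ii) fails, and since every UMD space is reflexive, (i) fails too. So in this case all three statements are false, hence (trivially) equivalent.

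From now on I may assume, after modifying $p$ on a $\mu$-null set, that $p:T\to[1,\infty)$. Put $\Phi(t,\lambda):=\lambda^{p(t)}$; then $L^{\Phi}(T;X)=L^{p(\cdot)}(T;X)$ and $L^{\Phi}(T)=L^{p(\cdot)}(T)$ by Example~\ref{ex:Young_function_variable_Lebesgue}, and by the final assertion of Example~\ref{ex:Young_function_variable_Lebesgue;Delta2&complemented_function} the function $\Phi$ and its complementary function $\Psi$ both satisfy $\Delta_2$ if and only if $1<\essinf p\leq\esssup p<\infty$, which in the present notation says exactly that $p_->1$ and $p_+<\infty$. (If $\mu(\{p=1\})>0$ then $\Psi$ is not finite-valued, so $\Psi\notin\Delta_2$, while also $p_-=1$; both sides of the equivalence then fail, consistently.) Applying Corollary~\ref{cor:Orlicz} to $\Phi,\Psi$ — legitimate since $T$ is divisible and $\sigma$-finite and $\Phi,\Psi$ are complementary Young functions — now yields: $L^{p(\cdot)}(T;X)=L^{\Phi}(T;X)$ is UMD $\iff$ $L^{p(\cdot)}(T)=L^{\Phi}(T)$ is reflexive and $X$ is UMD $\iff$ $\Phi,\Psi\in\Delta_2$ and $X$ is UMD $\iff$ $p_->1$, $p_+<\infty$ and $X$ is UMD, which is precisely (i)$\iff$(ii)$\iff$(iii).

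I do not anticipate a serious obstacle here: the argument is essentially a dictionary between the $\Delta_2$-formulation of Corollary~\ref{cor:Orlicz}/Example~\ref{ex:Young_function_variable_Lebesgue;Delta2&complemented_function} and the exponent-formulation via $p_\pm$. The only points that require a little care are the endpoint values of $p$ on sets of positive measure: the value $\infty$ (handled separately above, since there $\Phi$ is no longer finite-valued and the finite-valued Musielak--Orlicz setup does not directly apply) and the value $1$ (where $\Psi\notin\Delta_2$ and $p_-=1$), together with the routine identification $L^{\Phi}(T;X)=L^{p(\cdot)}(T;X)$ coming from Example~\ref{ex:Young_function_variable_Lebesgue}.
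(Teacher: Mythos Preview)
Your proposal is correct and follows exactly the paper's approach: the paper's proof is the single line ``This is an immediate consequence of Corollary~\ref{cor:Orlicz} and Example~\ref{ex:Young_function_variable_Lebesgue;Delta2&complemented_function}.'' You supply more detail than the paper does, in particular the separate treatment of the case $\mu(\{p=\infty\})>0$ (where Example~\ref{ex:Young_function_variable_Lebesgue} does not literally apply) and the consistency check at $p=1$; these are routine but worth spelling out, and your handling of them is fine.
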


The result that $L^{p(\cdot)}(T)$ is reflexive if and only if $p_->1$ and $p_+<\infty$ can also be found in \cite[Proposition~2.79$\&$Corollary~2.81]{Cruz-Uribe&Fiorenza2013} and \cite[Remark~3.4.8]{DHHR}.

\begin{proof}
This is an immediate consequence of Corollary~\ref{cor:Orlicz} and
Example~\ref{ex:Young_function_variable_Lebesgue;Delta2&complemented_function}.
\end{proof}

\begin{remark}
Let $Y:= L^{p(\cdot)}(T;X)$. Let us bound $\zeta(0,0)$ from below
using \eqref{eq:lowerboundforzeta00}. Note that by Example~\ref{ex:Young_function_variable_Lebesgue;Delta2&complemented_function}
one has that $K_{\Phi} = p_{+}$, $K_{\Psi} = p_{-}'$, and $h_{\Phi}=h_{\Psi} =0$,
so
$$
\zeta(0,0) \geq \frac{1}{6K_{\Psi}C_{K_{\Phi}, X}C_h} =
\frac{1}{3p_{-}'C_{p_{+}, X}},
$$
where an upper bound for $C_{p_{+},X}$ can be found using Remark
\ref{rem:oderofCKX}. From this one can obtain an upper bound for the UMD
constant using \eqref{eq:betavszeta}.
\end{remark}

In \cite{HouLiu} the analytic Radon--Nikodym (ARNP) and analytic
UMD (AUMD) properties are shown to hold for Musielak-Orlicz spaces
$L^{\Phi}(T)$ where $\Phi$ satisfies a condition which is slightly
more restrictive than $\Delta_2$. To end the paper we want to
state a related conjecture about spaces satisfying a randomized
version of UMD. In order to introduce it let $(\Omega', \A', \P')$
be a second probability space with a Rademacher sequence
$\varepsilon'=(\varepsilon_n')_{n\geq 1}$. A Banach space $X$ is said to be a
UMD$^{-}_{\rm PW}$ space if there is a $p\in [1, \infty)$ and a
constant $C\geq 0$ such that for all Paley--Walsh martingales $f$,
\[\|f\|_{L^p(\Omega;X)}\leq \|\varepsilon'*f\|_{L^p(\Omega\times \Omega';X)}.\]
This property turns out to be $p$-independent, and it gives a more general class of Banach spaces than the UMD spaces (see \cite{cox2018decoupling,CV1,CV2,Ga2}). For instance $L^1$ is a UMD$^{-}_{\rm PW}$ space.
\begin{conjecture}
Assume $\Phi:T\times [0,\infty)\to [0,\infty)$ is a Young function
such that $\Phi\in \Delta_2$. Then $L^{\Phi}(T)$ is UMD$^-_{\rm
PW}$.
\end{conjecture}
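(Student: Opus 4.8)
The plan is to follow the pattern of the proof of Theorem~\ref{thm:MainXvalued}, but in the one-sided (``minus'') regime. Since every UMD space is UMD$^{-}_{\rm PW}$, the case where $\Phi$ \emph{and} its complement $\Psi$ satisfy $\Delta_2$ already follows from Theorem~\ref{thm:MainXvalued}; the new content is the case $\Psi\notin\Delta_2$, where $L^{\Phi}(T)$ need not be reflexive (e.g.\ $\Phi(t,x)=x$, $\Phi(t,x)=x\log(1+x)$, or $\Phi(t,x)=x^{p(t)}$ with $\essinf p=1$). As in Theorem~\ref{thm:MainXvalued}, I would reduce the defining decoupling inequality of UMD$^{-}_{\rm PW}$ to a scalar estimate valid pointwise in $t\in T$. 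Let $f=(f_n)_{n\ge0}$ be a Paley--Walsh martingale in $Y:=L^{\Phi}(T)$, let $\eps'=(\eps_n')$ be a Rademacher sequence on an auxiliary space $\Omega'$, and set $\tilde f:=\eps'*f$. Then for a.e.\ $t\in T$ the sequence $(f_n(t))_n$ is a real Paley--Walsh martingale whose difference sequence has the form $d_k(t)=v_k(t)\eps_k$ with $v_k$ predictable, and $\tilde f_n(t)=\sum_{k\le n}v_k(t)\eps_k'$ is precisely its decoupled version.

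First I would invoke the classical one-sided decoupling inequality for scalar conditionally symmetric — in particular Paley--Walsh — martingales: there are absolute constants $C_1,C_2$ such that $\P(|h_n|>\lambda)\le C_1\,\P(C_2|\tilde h_n|>\lambda)$ for all $\lambda>0$, for every real Paley--Walsh martingale $h$ with decoupled version $\tilde h$ (see, e.g., \cite{cox2018decoupling,CV1}). Integrating this in $\lambda$ against $\varphi(t,\lambda)\,d\lambda$ with $\varphi=\partial_x\Phi$ gives $\E\,\Phi(t,\lambda|f_n(t)|)\le C_1\,\E\E'\,\Phi(t,C_2\lambda|\tilde f_n(t)|)$ for every $\lambda>0$, and absorbing the factor $C_2$ by iterating $\Delta_2$ (applying $\Phi(t,2\lambda)\le K_\Phi\Phi(t,\lambda)+h_\Phi(t)$ a fixed $\lceil\log_2 C_2\rceil$ times, at the cost of a controlled multiple of $h_\Phi(t)$, uniformly in $t$) yields a constant $C=C(K_\Phi)$ with
\[
\E\,\Phi\bigl(t,\lambda|f_n(t)|\bigr)\ \le\ C\Bigl(\E\E'\,\Phi\bigl(t,\lambda|\tilde f_n(t)|\bigr)+h_\Phi(t)\Bigr),\qquad\text{a.e.\ }t\in T,\ \lambda>0.
\]
Integrating over $t$ and applying Fubini to the nonnegative integrands produces the modular inequality
\[
\E\int_T\Phi\bigl(t,\lambda|f_n(t)|\bigr)\,d\mu(t)\ \le\ C\Bigl(\E\E'\int_T\Phi\bigl(t,\lambda|\tilde f_n(t)|\bigr)\,d\mu(t)+\|h_\Phi\|_{L^1(T)}\Bigr),\qquad\lambda>0,
\]
which is the exact analogue of the pointwise step in the proof of Theorem~\ref{thm:MainXvalued}. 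When $\Phi(t,x)=x$ one has $h_\Phi=0$ and the modular equals the norm, so this already reproves that $L^1(T)$ is UMD$^{-}_{\rm PW}$.

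The step I expect to be the genuine obstacle is passing from this modular inequality to the norm inequality $\|f_n\|_{L^p(\Omega;Y)}\le C\,\|\tilde f_n\|_{L^p(\Omega\times\Omega';Y)}$. In the proof of Theorem~\ref{thm:MainXvalued} the analogous passage works because the reduction $\|f_n\|_{L^\infty(\Omega;Y)}\le1$ forces $\int_T\Phi(t,|f_n(t)|)\,d\mu\le1$ \emph{pointwise in $\omega$} and permits an optimal choice of the scaling parameter; here the hypothesis controls $\tilde f_n$ only in $L^p(\Omega\times\Omega';Y)$, so no pointwise-in-$\omega$ control of its modular is available, and the modular inequality does not by itself yield a norm inequality — that would require higher moments of $\|\tilde f_n\|_Y$, which $\E\E'\|\tilde f_n\|_Y$ does not control. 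The natural remedy is duality: since $\Phi\in\Delta_2$, the space $L^{\Phi}(T)$ is order-continuous, so the singular part $\Lambda$ in Lemma~\ref{lem:dualofLPhi} vanishes and $\|f_n\|_{L^{\Phi}(T)}=\sup_g\int_T f_n g\,d\mu$ over the unit ball of $L^{\Psi}(T)$. Choosing a near-optimal $g=g_\omega$ (depending only on the $\eps_k$), truncating its $\F_n$-martingale at level $n$, expanding $f_n(t)$ and $g_\omega(t)$ into martingale differences and using orthogonality, one is led to bilinear quantities of the form $\E\langle v(t),w(t)\rangle_{\ell^{2}}=\E\E'[\tilde f_n(t)\,\tilde g_\omega(t)]$, hence — after Hölder in $\Omega\times\Omega'$ and in $T$ — to control $\|\tilde g_\omega(t)\|_{L^{\Psi(t,\cdot)}(\Omega\times\Omega')}$, equivalently the square function of $g_\omega(t)$, in $L^{\Psi(t,\cdot)}(\Omega)$ by $\|g_\omega(t)\|_{L^{\Psi(t,\cdot)}(\Omega)}$. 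Doob's maximal inequality in $L^{\Psi(t,\cdot)}(\Omega)$ is available (Proposition~\ref{prop:DoobPhi} with Lemma~\ref{lem:tobeused}, since $\Phi(t,\cdot)\in\Delta_2$ uniformly in $t$), but the required square-function / Khintchine estimate — i.e.\ the Burkholder--Davis--Gundy inequality for $\Psi(t,\cdot)$, cf.\ \cite{BDG} — needs $\Psi(t,\cdot)$ to be a moderate function, that is $\Psi\in\Delta_2$, which is exactly the hypothesis one is trying to drop; a related difficulty is that reassembling over $T$ compares a mixed $L^{\Phi}(L^{\Phi})$-norm with an $L^{1}(L^{\Phi})$-norm, and these disagree for non-power $\Phi$ (Kolmogorov--Nagumo). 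Removing these obstructions — through a sharper dual argument that bypasses the $L^{\Psi}$-side randomization estimates, or through a transference principle ``$E(X)$ is UMD$^{-}_{\rm PW}$ iff $E$ and $X$ are'' for Banach function spaces (in the spirit of Rubio de Francia's theorem, cf.\ Remark~\ref{rem:EX}) together with a lattice criterion for UMD$^{-}_{\rm PW}$ implied by $\Delta_2$ — seems to be the crux, and is presumably why the statement is only a conjecture.
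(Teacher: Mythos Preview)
The statement you are attempting to prove is a \emph{conjecture} in the paper, not a theorem: the paper provides no proof and explicitly says ``The conjecture is open also in the case $\Phi$ does not depend on $T$.'' So there is no paper proof to compare your proposal against, and your proposal should be read as an outline of a strategy together with an honest diagnosis of where it stalls --- which, to your credit, is exactly how you present it in the final paragraph.

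Your strategy is in line with the evidence the paper itself cites: the pointwise-in-$t$ modular inequality you derive is precisely the kind of $\Phi$-analogue of Lemma~\ref{lem:UMDPhi} that the paper points to in \cite[Theorem~4.1]{CV2} and \cite[Theorem~1.1]{HM} (only $\Phi\in\Delta_2$ is needed there). You also correctly locate the obstruction: going from the modular estimate to a norm estimate in $L^p(\Omega;L^{\Phi}(T))$ is exactly where the method of Theorem~\ref{thm:MainXvalued} relied on $\Psi\in\Delta_2$ (via Proposition~\ref{prop:DoobPhi} applied on the $\Phi$-side, after Lemma~\ref{lem:tobeused}), and your duality attempt runs into the dual version of the same hypothesis through the need for BDG/Khintchine in $L^{\Psi(t,\cdot)}$. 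Note, incidentally, that the paper hints Doob's inequality ``plays a less prominent role for UMD$^{-}$'' because of \cite[Lemma~2.2]{CV2}; that lemma gives a direct passage from distributional/weak estimates to the decoupling norm inequality in certain settings, and might be a more promising route than the duality argument you sketch --- but it has not, as far as the paper indicates, been pushed through for general $L^{\Phi}(T)$ with only $\Phi\in\Delta_2$.

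In short: there is no gap to name in your argument because you do not claim a proof; your identification of the crux matches the paper's own assessment, and the conjecture remains open.
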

The conjecture is open also in the case $\Phi$ does not dependent on $T$. If $\Phi:[0,\infty)\to [0,\infty)$ is merely continuous, increasing to infinity and $\Phi(0)=0$ and satisfies $\Delta_2$, then the same question can be asked. However, in this case $L^{\Phi}(T)$ is not a Banach space, but only a quasi-Banach space. Some evidence for the conjecture can be found in \cite[Theorem 4.1]{CV2} and \cite[Theorem 1.1]{HM} where analogues of Lemma \ref{lem:UMDPhi} can be found (only $\Phi\in \Delta_2$ is needed in the proof). Doob's inequality plays a less prominent role for UMD$^-$ because of \cite[Lemma 2.2]{CV2}. Similar questions can be asked for the possibly more restrictive ``decoupling property'' of a quasi-Banach space $X$ introduced in \cite{CV1,CV2}.

\def\cprime{$'$}

\end{document}